% !TEX TS-program = pdflatex
% !TEX encoding = UTF-8 Unicode

% This is a simple template for a LaTeX document using the "article" class.
% See "book", "report", "letter" for other types of document.

\documentclass[11pt]{article} % use larger type; default would be 10pt

\usepackage[utf8]{inputenc} % set input encoding (not needed with XeLaTeX)

%%% Examples of Article customizations
% These packages are optional, depending whether you want the features they provide.
% See the LaTeX Companion or other references for full information.

%%% PAGE DIMENSIONS
\usepackage{geometry} % to change the page dimensions
\geometry{a4paper} % or letterpaper (US) or a5paper or....
\geometry{textwidth=168mm, textheight=237.6mm}
% \geometry{landscape} % set up the page for landscape
%   read geometry.pdf for detailed page layout information

\usepackage{graphicx} % support the \includegraphics command and options

\usepackage[round]{natbib}

%%% PACKAGES
\usepackage{booktabs} % for much better looking tables
\usepackage{array} % for better arrays (eg matrices) in maths
\usepackage{paralist} % very flexible & customisable lists (eg. enumerate/itemize, etc.)
\usepackage{verbatim} % adds environment for commenting out blocks of text & for better verbatim
\usepackage{subfig} % make it possible to include more than one captioned figure/table in a single float
% These packages are all incorporated in the memoir class to one degree or another...

\usepackage{amsmath}
\usepackage{amsfonts}
\usepackage{amssymb}
\usepackage{mathrsfs}
\usepackage{bbm}
\usepackage{amsthm}

\usepackage{multirow}

\usepackage[colorlinks=true, allcolors=blue]{hyperref}

% THEOREMS -------------------------------------------------------
\newtheorem{theorem}{Theorem}[section]
\newtheorem{corollary}[theorem]{Corollary}

\newtheorem{proposition}[theorem]{Proposition}

\theoremstyle{definition}
\newtheorem{definition}[theorem]{Definition}
\theoremstyle{remark}
\newtheorem{rem}[theorem]{Remark}
\theoremstyle{remark}
\newtheorem{example}[theorem]{Example}
\numberwithin{equation}{section}

% latex macros

\newcommand{\F}{\mathcal{F}}
\newcommand{\cC}{\mathcal{C}}
\newcommand{\cS}{\mathcal{S}}
\newcommand{\cP}{\mathcal{P}}
\newcommand{\cI}{\mathcal{I}}

\newcommand{\EEE}{\mathbb{E}}
\newcommand{\PPP}{\mathbb{P}}
\newcommand{\RRR}{\mathbb{R}}

% functionals
\newcommand{\Ex}{\mathrm{E}}
\newcommand{\Hu}{\mathrm{H}}
\newcommand{\Qu}{\mathrm{Q}}

% mixing measure
\newcommand{\mm}{M}

\newcommand{\rA}{\mathrm{A}}
\newcommand{\rB}{\mathrm{B}}

\newcommand{\dd}{\mathrm{d}}
\newcommand{\one}{\mathbbm{1}}
%\newcommand{\one}{\mathbb{1}}

%%% HEADERS & FOOTERS
\usepackage{fancyhdr} % This should be set AFTER setting up the page geometry
\pagestyle{fancy} % options: empty , plain , fancy
 % customise the layout...
\lhead{}\chead{}\rhead{}
\lfoot{}\cfoot{\thepage}\rfoot{}

%%% SECTION TITLE APPEARANCE
\usepackage{sectsty}
\allsectionsfont{\sffamily\mdseries\upshape} % (See the fntguide.pdf for font help)
% (This matches ConTeXt defaults)

%%% ToC (table of contents) APPEARANCE
\usepackage[nottoc,notlof,notlot]{tocbibind} % Put the bibliography in the ToC
\usepackage[titles,subfigure]{tocloft} % Alter the style of the Table of Contents

 % No bold!

%%% END Article customizations

%%% The "real" document content comes below...

\title{Point forecasting and forecast evaluation with generalized Huber loss}

% List abbreviations here, if any. Please note that it is preferred that abbreviations be defined at the first instance they appear in the text, rather than creating an abbreviations list.
%\abbrevs{ABC, a black cat; DEF, doesn't ever fret; GHI, goes home immediately.}

% Include full author names and degrees, when required by the journal.
% Use the \authfn to add symbols for additional footnotes and present addresses, if any. Usually start with 1 for notes about author contributions; then continuing with 2 etc if any author has a different present address.
\author{Robert J. Taggart\\Bureau of Meteorology\\robert.taggart@bom.gov.au}

%\affil[1]{Bureau of Meteorology, Sydney, NSW, Australia}
%
%\corraddress{Bureau of Meteorology, PO Box 413 Darlinghurst NSW 1300, Australia}
%\corremail{robert.taggart@bom.gov.au}

\begin{document}

\maketitle

\begin{abstract}
\noindent Huber loss, its asymmetric variants and their associated functionals (here named \textit{Huber functionals}) are studied in the context of point forecasting and forecast evaluation. The Huber functional of a distribution is the set of minimizers of the expected (asymmetric) Huber loss, is an intermediary between a quantile and corresponding expectile, and also arises in M-estimation. Each Huber functional is elicitable, generating the precise set of minimizers of an expected score, subject to weak regularity conditions on the class of probability distributions, and has a complete characterization of its consistent scoring functions. Such scoring functions admit a mixture representation as a weighted average of elementary scoring functions. Each elementary score can be interpreted as the relative economic loss of using a particular forecast for a class of investment decisions where profits and losses are capped. The relevance of this theory for comparative assessment of weather forecasts is also discussed.
\vspace{10pt}

\noindent\textbf{Keywords:} Consistent scoring function; Decision theory; Forecast ranking; Economic utility; Elicitability; Expectile; Huber loss; Quantile.
\end{abstract}

%%%%%%%%
\section{Introduction}\label{s:intro}
%%%%%%%%

In many fields of human endeavor, it is desirable to make forecasts for an uncertain future. Hence, forecasts should be probabilistic, presented as probability distributions over possible future outcomes \citep{gneiting2014probabilistic}. Nonetheless, many practical situations require forecasters to issue single-valued point forecasts. In this situation, a directive is required about the specific feature or functional of the predictive distribution that is being sought, or about the loss (or scoring) function that is to be minimized \citep{gneiting2011making, ehm2016quantiles}. Examples of functionals include the mean, median, a quantile or  expectile, with the latter recently attracting interest in risk management \citep{bellini2017risk}. Examples of scoring functions include the squared error scoring function $S(x,y)=(x-y)^2$ and absolute error scoring function $S(x,y)=|x-y|$. In the case that the directive is in the form of a statistical functional, it is critical that any scoring function used is appropriate for the task at hand. Ideally, a point forecast sampled from one's predictive distribution by using the requested functional should also minimize one's expected score. That is, the scoring function should be \textit{consistent} for the functional \citep{gneiting2011making}. It is well-known that the squared error scoring function is consistent for the mean and that the absolute error scoring function is consistent for the median. Within this framework, predictive performance is assessed by computing the mean score over a number of forecast cases.

This paper studies Huber loss, and asymmetric variants of Huber loss (Definition~\ref{def:generalised Huber loss}), as a scoring function of point forecasts, along with its associated functional. The classical Huber loss function \citep{huber1964robust} with positive tuning parameter $a$ is given by
\begin{equation}\label{eq:huber scoring intro}
S(x,y) = 
\begin{cases}
\frac{1}{2}(x-y)^2, & |x-y|\leq a \\
a|x-y|-\frac{1}{2}a^2, & |x-y| > a,
\end{cases}
\end{equation}
where $x$ is a point forecast and $y$ the corresponding realization. It applies a quadratic penalty to small errors and a linear penalty to large errors and is an intermediary between the squared error and absolute error scoring functions. Huber loss is used by the Australian Bureau of Meteorology (BoM) to compare predictive performance of temperature and wind speed forecasts with a view to streamlining forecast production \citep{foley2019evidence}, and is described to weather forecasters in that organization as  ``a compromise between the absolute error and the squared error, in an attempt to use the benefits of both of these.'' The motivation for using Huber loss in this applied context is given in Section~\ref{s:Huber loss and BoM}.

For a given predictive distribution, we call the set of point forecasts that minimize expected Huber loss the \textit{Huber mean} of that distribution. The Huber mean is an intermediary between the median and the mean with some appealing properties. It can be described as the midpoint of the `central interval' of length $2a$ of a distribution, where $a$ is the tuning parameter of the corresponding Huber loss function (see Equation~\ref{eq:HF equiv1} and the accompanying geometric interpretation). The Huber mean, unlike the mean, is not dependent on the behavior of the distribution at its tails. At the same time, it accounts for more behavior in the vicinity of the center of the distribution than the median. It is therefore a robust measure of location for a distribution. More generally, the \textit{Huber functional} gives the minimizers of expected asymmetric Huber loss, and is an intermediary between some $\alpha$-quantile and $\alpha$-expectile, as was also noted by \cite{jones1994expectiles} from the perspective of M-estimation. Basic properties of Huber means and Huber functionals are discussed in Section~\ref{s:functionals}, many of which can be traced to \cite{huber1964robust} in the classical symmetric case. 

In the context of point forecasting, an essential property of a statistical functional is that it is \textit{elicitable}; that is, that the functional generates precisely the set of minimizers of some expected score \citep{lambert2008eliciting}. The Huber functional is shown to be elicitable for classes of probability distributions on $\RRR$ (or subintervals of $\RRR$) under weak regularity assumptions (Theorem~\ref{th:consistency}). The class of scoring functions that are consistent for the Huber functional is also characterized, being parameterized by the set of convex functions (also Theorem~\ref{th:consistency}). Edge cases of this characterization recover the general form of scoring functions that are consistent for quantiles \citep{gneiting2011quantiles, thomson1979eliciting} and expectiles \citep{gneiting2011making}.

Determining which consistent scoring function to use is non-trivial since in practice this choice can influence forecast rankings \citep{murphy1977value, schervish1989general, merkle2013choosing} as illustrated in Section~\ref{ss:ranking}. In the case of quantiles and expectiles, \cite{ehm2016quantiles} gave clarity to this issue by showing that each consistent scoring function for those functionals admits a \textit{mixture representation}; that is, can be expressed as a weighted average of elementary scoring functions. Likewise, each consistent scoring function for the Huber functional can be expressed as a weighted average of elementary scoring functions that are consistent for the Huber functional (Theorem~\ref{th:mixrep}). Again, the analogous results for quantiles and expectiles are recoverable as edge cases of this theorem. The Huber functional and its associated elementary scores arise naturally in optimal decision rules for investment problems with fixed up-front costs, where profits and losses are capped (Section~\ref{ss:economic interpretation}). Such models are intermediaries between the classical simple cost--loss decision model (e.g. \cite{richardson2000skill, wolfers2008prediction}) on the one hand, and investment decision models with no bounds on profits or losses \citep{ehm2016quantiles, bellini2017risk} on the other. The mixture representation, along with the economic interpretation of elementary scoring functions and Murphy diagrams, aids interpreting forecast rankings in empirical situations (\cite{ehm2016quantiles}; Section~\ref{ss:murphy}). Applications include, for example, selecting a consistent scoring function that emphasizes predictive performance at the extremes of a variable's range \citep{taggart2021extremes}.

Conclusions are presented in Section~\ref{s:conclusion} and proofs of the main results are given in the appendix.

%%%%%%%%%%%%%%%%%%%%%%%%%%%%%%%%%%%%%
\section{The use of Huber loss for assessing weather forecast quality}\label{s:Huber loss and BoM}
%%%%%%%%%%%%%%%%%%%%%%%%%%%%%%%%%%%%

The following summarizes the context and initial motivation for using Huber loss to assess predictive performance of forecast systems at the Australian BoM. 

The U.S. National Weather Service and the BoM have an operational model where automated gridded weather forecasts are curated and manually adjusted by meteorologists prior to publication for public use. Advances in numerical weather prediction have led to on-going re-evaluation of the role of human forecasters in meteorological service provision \citep{just2020streamlining, sturrock2020changing}. At the BoM, comparative assessment of the predictive performance of official  forecasts, which are issued by meteorologists, and automated forecasts from the Operational Consensus Forecasts (OCF) system \citep{bom2018upgrades}, was performed initially for forecasts of probability of precipitation. One goal was to advise on the likely impact on forecast quality if automation were adopted. The forecast service was well-defined for these precipitation forecasts and the Brier score was used as a consistent scoring function for ranking the two forecast systems and reporting on the statistical significance of those ranks (\cite{griffiths2017advice}, c.f. Section~\ref{ss:ranking}).

However, for some other variables, such as daily maximum temperature, the BoM's forecast service was not clearly defined. Point forecasts (i.e. $\RRR$-valued forecasts) were requested from forecasters but there was no policy on which point forecast was suitable, either in the form of a directive (e.g. issue the mean of the predictive distribution) or of a scoring (or loss) function to be minimized. This lack of service clarity led a team within the BoM to seek a scoring function that would be suitable for penalizing forecast errors when the forecast user group is the heterogeneous public.

First, scoring functions that penalized over- and under-prediction asymmetrically were not considered. Second, anecdotal evidence suggested that maximum temperature forecasts with small errors (where the forecast $x$ and observation $y$ differed by about $1^\circ$C) were generally viewed favorably by the public whereas those with larger forecast errors (around $4$ or $5^\circ$C) were not. Furthermore, the heavy reputational costs associated with egregious errors of comparable size were similar: a $9^\circ$C error was not much worse than an $8^\circ$C error. Hence, for daily maximum temperature forecasts, two clear preferences could be articulated:
\begin{enumerate}
\item five $1^\circ$C errors are preferred over four perfect forecasts followed by a $4^\circ$C error;
\item a $9^\circ$C error followed by a perfect forecast is to be preferred over an $8^\circ$C error followed by a $4^\circ$C error.
\end{enumerate}
Two commonly used scoring functions, the absolute error and squared error functions, do not satisfy both requirements. Table~\ref{tab:motivation} shows that the mean absolute error scores for these error sequences are consonant with Preference 2 but not with Preference 1, while the opposite holds for mean squared error. However, the mean scores generated by the Huber loss scoring function of Equation~(\ref{eq:huber scoring intro}), with $a=3$, are consonant with both preferences. This is the scoring function that was selected.

{\renewcommand{\arraystretch}{1.25}
\begin{table}[]
\begin{center}
\caption{Mean absolute error (MAE), mean squared error (MSE) and mean Huber loss (MHL) for different error sequences. Here, Huber loss is given by Equation~(\ref{eq:huber scoring intro}) with the choice $a=3$.}
\label{tab:motivation}
\begin{tabular}{|r||r|r|r|}
\hline
sequence of errors $(x-y)$ & MAE & MSE  & MHL   \\ \hline
$(1, 1, 1, 1, 1)$	& 1		& 1		& 0.5   \\ 
$(0, 0, 0, 0, 4)$	& 0.8		& 3.2		& 1.5   \\ \hline
$(9, 0)$				& 4.5		& 40.5	& 11.25  \\ 
$(8,4)$				& 6		& 40		& 13.5 \\ \hline
\end{tabular}
\end{center}
\end{table}
}

Huber loss has subsequently been used by the BoM to score daily maximum and minimum temperature forecasts, and hourly temperature, dewpoint temperature and wind magnitude forecasts, each with an appropriate choice of tuning parameter $a$. The remainder of the paper is devoted to developing the theory of Huber loss, and its asymmetric variants, in the context of point forecasting and forecast evaluation, with some applications of the theory illustrated using official BoM forecasts and OCF forecasts.

%%%%%%%%%%%%%%%%%%%%%%%%%%%%%%%%%%%%%
\section{Quantiles, expectiles and Huber functionals}\label{s:functionals}

To begin, we establish some notation. We work in a setting where forecasts are made for some quantity, where the range of possible outcomes belongs to some interval $I\subseteq\RRR$. Forecasts of the quantity can be in the form of a predictive distribution $F$ on $I$ or of a point forecast $x$ in $I$. The realization (or observation) of the quantity will usually be denoted by $y$.

Let $\F(\RRR)$ denote the class of probability measures on the Borel--Lebesgue sets of $\RRR$ and $\F(I)$ denote the subset of probability measures on $I$. For simplicity, we do not distinguish between a measure $F$ in $\F(\RRR)$ and its associated cumulative density function (CDF) $F$. For $F$ in $\F(I)$, write $Y\sim F$ to indicate that a random variable $Y$ has distribution $F$; that is, $\PPP(Y\leq t)=F(t)$ whenever $t\in I$. Throughout, the notation $\EEE_F$ indicates that the expectation is taken with respect to $Y\sim F$.

The power set of a set $A$ will be denoted $\cP(A)$. For a real-valued quantity $X$, we denote by $X_+$ the quantity $\max(0,X)$. The partial derivative with respect to the $i$th argument of a function $g$ is denoted $\partial_i g$.

Whenever $a, b \in [0,\infty]$, define  the `capping function' $\kappa_{a,b}:\RRR \to \RRR$ by
\[\kappa_{a,b}(x) = \max(\min(x,b),-a) \qquad\forall x\in\RRR.\]
That is, $\kappa_{a,b}(x)$ is $x$ capped below by $-a$ and above by $b$. Note that $x_+ = \kappa_{0,\infty}(x)$.

In many contexts users or issuers of forecasts want a relevant point summary $x$ of a predictive distribution $F$. This can be generated by requesting a specific statistical functional of $F$. Given an interval $I\subseteq\RRR$ and some space $\F$ of probability distributions in $\F(I)$, a \textit{statistical functional} (or simply a \textit{functional}) $\mathrm{T}$ on $\F(I)$ is a mapping $\mathrm{T}:\F(I)\to\cP(I)$ \citep{horowitz2006identification, gneiting2011making}. Two important examples are quantiles and expectiles.

\begin{example}\label{ex:quantiles}
Suppose that  $I\subseteq\RRR$ and $\alpha\in(0,1)$. The \textit{$\alpha$-quantile} functional $\Qu^\alpha:\F(I)\to\cP(I)$ is defined by
\[\Qu^{\alpha}(F) = \{x\in I: \lim_{y\uparrow x}F(y) \leq \alpha \leq F(x)\}\]
whenever $F\in \F(I)$. For any $F$, $\Qu^{\alpha}(F)$ is a closed bounded interval of $I$. The two endpoints only differ when the level set $F^{-1}(\alpha)$ contains more than one point, so typically the functional is single valued. The median functional $\Qu^{1/2}$ arises when $\alpha=1/2$. If $q$ is an $\alpha$-quantile of $F$ and $F$ is continuous at $q$ then $F(q)/(1-F(q)) = \alpha/(1-\alpha)$. Figure~\ref{fig:functionals} illustrates the quantiles $\Qu^{1/2}(F)$ (the median) and $\Qu^{0.7}(F)$, where $F$ is the exponential distribution. The aforementioned property is illustrated in the figure via the vertical dashed line segments, whose lengths are in the ratio $\alpha:(1-\alpha)$.
\end{example}

\begin{example}\label{ex:expectiles}
Given an interval $I\subseteq\RRR$, let $\F_1(I)$ denote the space of probability measures $\F(I)$ with finite first moment. The \textit{$\alpha$-expectile} functional $\Ex^\alpha:\F_1(I)\to\cP(I)$ is defined by
\begin{equation}\label{eq:expectiles}
\Ex^\alpha(F) = \left\{x\in I: \alpha\int_x^\infty(y-x)\,\dd F(y) = (1 - \alpha)\int_{-\infty}^x(x-y)\,\dd F(y)  \right\}
\end{equation}
whenever $F\in \F_1(I)$. It can be shown there is a unique solution $x$ to the defining equation, so expectiles are single-valued. Expectiles were introduced by \cite{newey1987asymmetric} in the context of least squares estimation and have recently attracted interest in financial risk management \citep{bellini2017risk}. Expectiles share properties of both expectations as well as quantiles, and nests the mean functional $\Ex^{1/2}$. Using integration by parts, one can show that $\{x\} = \Ex^\alpha(F)$ if and only if 
\[
 \alpha\int_{[x, \infty)\cap I} (1-F(t))\,\dd t = (1-\alpha)\int_{(-\infty,x]\cap I} F(t)\,\dd t.
\]
The latter equation gives a geometric interpretation of the $\alpha$-expectile of $F$. It is the unique point $x$ such that the $(1-\alpha$)-weighted area of the region bounded by $F$ and $0$ on the interval $(-\infty,x]\cap I$ is equal to the $\alpha$-weighted area of the region bounded by $F$ and $1$ on the interval $[x,\infty)\cap I$. Figure~\ref{fig:functionals} illustrates this interpretation, via the areas of the shaded regions, for the expectiles $\Ex^{1/2}(F)$ (i.e. mean) and $\Ex^{0.7}(F)$, where $F$ is the exponential distribution.
\end{example}

\begin{figure}[bt]
\centering
\includegraphics{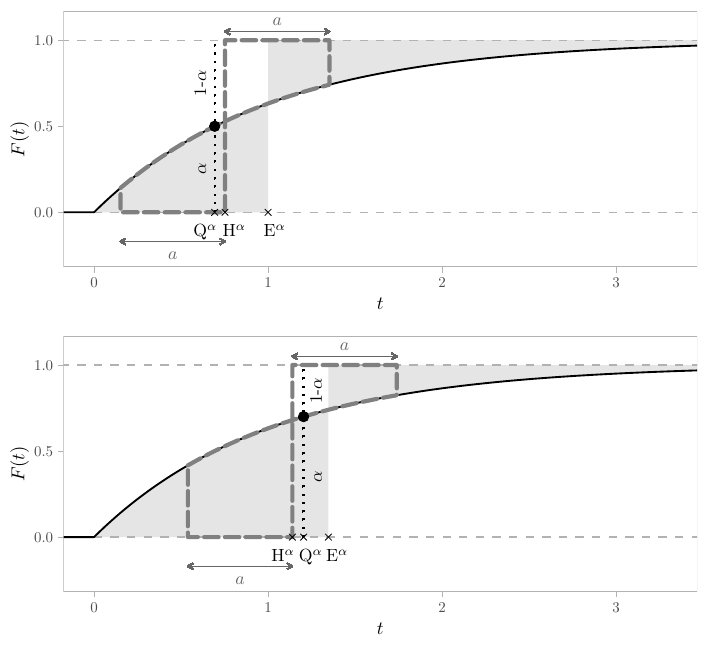}
\caption{The quantile $\Qu^\alpha$, expectile $\Ex^\alpha$ and Huber quantile $\Hu^\alpha$ (where $\Hu^\alpha=\Hu^\alpha_a(F)$, $a=0.6$) when $\alpha=0.5$ (top) and $\alpha=0.7$ (bottom) for the exponential distribution $F(t)=1-\exp(-t),\, t\geq0$. The ratios of the areas of the two shaded regions, of the areas of the two regions bounded by thick dashed lines, and of the lengths of the two dotted line segments, are $\alpha:(1-\alpha)$.}
\label{fig:functionals}
\end{figure}

Equation (\ref{eq:expectiles}) can be re-written as
\begin{equation}\label{eq:expectiles1}
\Ex^\alpha(F) = \left\{x\in I: \alpha\EEE_F\,\kappa_{0,\infty}(Y-x) = (1-\alpha)\EEE_F\,\kappa_{0,\infty}(x-Y)  \right\}.
\end{equation}
By modifying the parameters of the capping function $\kappa_{0,\infty}$, we introduce another functional.

\begin{definition}\label{def:Huber functional}
Suppose that $a > 0$, $b > 0$, $\alpha\in(0,1)$ and that $I\subseteq\RRR$ is an interval. Then the \textit{Huber functional} $\Hu_{a,b}^\alpha:\F(I)\to \cP(I)$ is defined by
\begin{equation}\label{eq:HF def new}
\Hu_{a,b}^\alpha(F) = \left\{ x\in I: \alpha\EEE_F\,\kappa_{0,a}(Y-x) = (1 - \alpha)\EEE_F\,\kappa_{0,b}(x-Y)  \right\}
\end{equation}
whenever $F\in\F(I)$. In the case when $a=b$, we simplify notation and write $\Hu^\alpha_a(F)$ for $H_{a,a}^\alpha(F)$. The special case $\Hu^{1/2}_a(F)$ is called a \textit{Huber mean}.
\end{definition}

We have named the Huber functional for Peter Huber, whose loss function 
\begin{equation}\label{eq:h}
h_a(u) =
\begin{cases}
\frac{1}{2}u^2, & |u|\leq a \\
a|u|-\frac{1}{2}a^2, & |u| > a
\end{cases}
\end{equation}
\citep{huber1964robust} also bears his name. The connection between the Huber functional and Huber loss will be made explicit in Section~\ref{s:scoring}.
Since the Huber functional is an example of a generalized quantile \citep{breckling1988m,jones1994expectiles, bellini2014generalized}, $\Hu^\alpha_{a,b}(F)$ may also be called a \textit{Huber quantile} of $F$. We note here that $x\in\Hu^\alpha_{a,b}(F)$ if and only if $\EEE_FV(x,Y)=0$, where $V: I\times I\to\RRR$ is given by
\begin{equation}\label{eq:identification function}
V(x,y)=|\one_{\{x\geq y\}}-\alpha|\kappa_{a,b}(x-y).
\end{equation}
The function $V$ is an \textit{identification function}  \citep[Section 2.4]{gneiting2011making} for $H^\alpha_{a,b}$, and will be used to establish important properties of the Huber functional.

\begin{figure}[bt]
\centering
\includegraphics{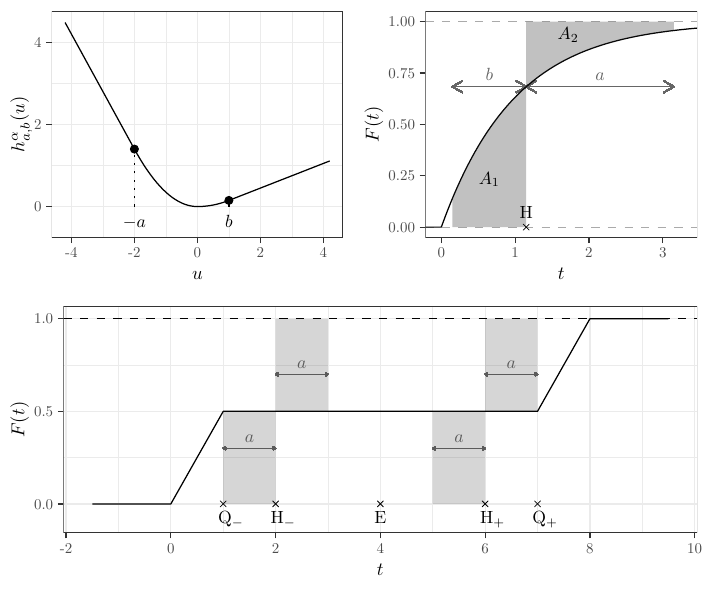}
\caption{Top left: Generalized Huber loss function $h^\alpha_{a,b}$ where $\alpha=0.7$, $a=2$ and $b=1$. Top right: The Huber quantile $\Hu=\Hu^\alpha_{a,b}(F)$ where $\alpha=0.7$, $a=2$ and $b=1$ for the exponential distribution $F(t)=1-\exp(-t),\, t\geq0$. The two shaded areas satisfy the equation $(1-\alpha)A_1=\alpha A_2$. Bottom: A piecewise linear distribution $F$ with endpoints $\Hu_-$ and $\Hu_+$ of the interval $\Hu^{1/2}_a(F)$ where $a=1$, endpoints $\Qu_-$ and $\Qu_+$ of the median interval $\Qu^{1/2}(F)$, and the mean value $\Ex$. The area of each shaded rectangle is equal.}
\label{fig:huberfunctionals}
\end{figure}

As with expectiles, a routine calculation using integration by parts shows that $x\in \Hu_{a,b}^\alpha(F)$ if and only if
\begin{equation}\label{eq:HF equiv1}
\alpha \int_{[x,x+a]\cap I} (1 - F(t))\,\dd t =  (1 - \alpha) \int_{[x-b,x]\cap I} F(t) \, \dd t.
\end{equation}
This gives a geometric interpretation of the Huber functional as the set of points $x$ where the $(1-\alpha)$-weighted area of the region bounded by $F$ and $0$ on $[x-b,x]\cap I$ equals the $\alpha$-weighted area of the region bounded by $F$ and $1$ on $[x,x+a]\cap I$. In the case when $\alpha=1/2$, the two areas are equal. This is illustrated for the exponential distribution in Figure~\ref{fig:functionals} for $\Hu^\alpha_{0.6}(F)$ (when $\alpha=1/2$ and $\alpha=0.7$) and in Figure~\ref{fig:huberfunctionals} for $\Hu^\alpha_{a,b}(F)$ (when $\alpha=0.7$, $a=2$ and $b=1$).

In light of the corresponding geometric interpretations of quantiles and expectiles, and also the similarity between Equations (\ref{eq:expectiles1}) and (\ref{eq:HF def new}), it should come as no surprise that $\alpha$-quantiles and $\alpha$-expectiles are nested as edge cases in the family $\{\Hu^\alpha_a\}_{a\in(0,\infty)}$ of Huber functionals. The following proposition makes this precise and lists several other basic properties of the Huber functional. In what follows, $\overline{F^{-1}(w)}$ denotes the closure in $\RRR$ of the level set $F^{-1}(w)$, and $R(F)$ denotes the smallest closed interval of $\RRR$ that contains the support of the measure $F$.

\begin{proposition}\label{prop:Huber funct properties}
Suppose that $a > 0$, $b > 0$, $\alpha\in(0,1)$, $I\subseteq\RRR$ is an interval and $F\in\F(I)$. 
\begin{enumerate}
\item Then $\Hu_{a,b}^\alpha(F)$ is a nonempty closed bounded subinterval of $I$ contained in $R(F)$.
\item If $\Hu_{a,b}^\alpha(F) = [c,d]$ for some $c<d$, then there exists $w$ in $(0,1)$ such that $\overline{F^{-1}(w)} = [c-b, d+a]$ and $\alpha = bw/(bw+a(1-w))$.
\item If there exists $w$ in $(0,1)$ such that $\overline{F^{-1}(w)} = [c_0, d_0]$ for some $c_0$ and $d_0$ satisfying $d_0-c_0>a+b$, then $\Hu_{a,b}^\alpha(F) = [c_0+b,d_0-a]$ where $\alpha = bw/(bw+a(1-w))$. 
\item $\lim_{a\downarrow0}\min(H_a^\alpha(F)) = \min(\Qu^\alpha(F))$ and $\lim_{a\downarrow0}\max(H_a^\alpha(F)) = \max(\Qu^\alpha(F))$.
\item If $F$ has finite first moment then
\[\lim_{a\to\infty}\min(H_a^\alpha(F)) = \lim_{a\to\infty}\max(H_a^\alpha(F)) = \Ex^\alpha(F)\,.\]
\item If $\tilde F\in\F(I)$ and $F(t)=\tilde F(t)$ whenever
\[
\min(\Hu^\alpha_{a,b}(F))-b \leq t \leq \max(\Hu^\alpha_{a,b}(F)) + a,
\]
then $\Hu^\alpha_{a,b}(F) = \Hu^\alpha_{a,b}(\tilde F)$.
\end{enumerate}
\end{proposition}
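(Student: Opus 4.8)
Throughout I would work with the function
\[\Psi_F(x)\;=\;\alpha\int_{[x,x+a]\cap I}(1-F(t))\,\dd t\;-\;(1-\alpha)\int_{[x-b,x]\cap I}F(t)\,\dd t,\]
which by~(\ref{eq:HF equiv1}) vanishes precisely on $\Hu_{a,b}^\alpha(F)$ and equals $-\EEE_F V(x,Y)$ for the identification function~(\ref{eq:identification function}). Because $1-F$ and $F$ are bounded and monotone, $\Psi_F$ is continuous, absolutely continuous, and non-increasing on $I$; hence $\Hu_{a,b}^\alpha(F)=\Psi_F^{-1}(\{0\})$ is automatically a closed subinterval of $I$. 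For Part~1 it remains to see it is non-empty, bounded, and inside $R(F)$. I would examine $\Psi_F$ near the ends of $I$: at an infinite end $\Psi_F$ has limit $\alpha a>0$ (lower) or $-(1-\alpha)b<0$ (upper), and where $R(F)$ is bounded $\Psi_F$ is strictly positive for $x<\min R(F)$ and strictly negative for $x>\max R(F)$, since one of the two integrands then vanishes identically while the other does not. Non-emptiness follows from the intermediate value theorem; boundedness and $\Hu_{a,b}^\alpha(F)\subseteq R(F)$ follow from these strict-sign statements together with monotonicity of $\Psi_F$.

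For Parts~2 and~3 the key fact is $\Psi_F'(x)=-(1-\alpha)\bigl(F(x)-F(x-b)\bigr)-\alpha\bigl(F(x+a)-F(x)\bigr)$ for a.e.\ $x$, a sum of two non-positive terms. If $\Hu_{a,b}^\alpha(F)=[c,d]$ with $c<d$, then $\Psi_F\equiv0$ on $[c,d]$, so $\Psi_F'\equiv0$ a.e.\ there, forcing both terms to vanish a.e.\ on $(c,d)$; monotonicity of $F$ then upgrades this to $F\equiv w$ on $(c-b,d+a)$ for a constant $w$, which cannot be $0$ or $1$ because either value would push $R(F)$ — and hence $\Hu_{a,b}^\alpha(F)$, by Part~1 — entirely to one side of $[c,d]$. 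Substituting $x=c$ into~(\ref{eq:HF equiv1}) gives $\alpha=bw/(bw+a(1-w))$, and maximality of $[c,d]$ (i.e.\ $\Psi_F\neq0$ just outside) promotes the flat stretch to $\overline{F^{-1}(w)}=[c-b,d+a]$, proving Part~2. Part~3 is the converse: given the flat stretch, substitution into~(\ref{eq:HF equiv1}) shows every $x\in[c_0+b,d_0-a]$ is a zero of $\Psi_F$ for $\alpha=bw/(bw+a(1-w))$, while for $x$ just below $c_0+b$ (resp.\ just above $d_0-a$) the facts $F<w$ left of $c_0$ and $F>w$ right of $d_0$ make $\Psi_F(x)>0$ (resp.\ $<0$) strictly; by monotonicity of $\Psi_F$ no point outside $[c_0+b,d_0-a]$ is a zero, and strict monotonicity of $w\mapsto bw/(bw+a(1-w))$ makes $w$ unambiguous. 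Part~6 also rests on Part~2: if $F=\tilde F$ on $[c-b,d+a]$ where $[c,d]=\Hu_{a,b}^\alpha(F)$, then for each $x\in[c,d]$ the intervals $[x,x+a]$ and $[x-b,x]$ sit inside $[c-b,d+a]$, so $\Psi_{\tilde F}(x)=\Psi_F(x)=0$ and $[c,d]\subseteq\Hu_{a,b}^\alpha(\tilde F)$; the reverse inclusion I would get by exchanging $F$ and $\tilde F$, after using Part~2 to see that the flat stretch responsible for $\Hu_{a,b}^\alpha(F)$ is a flat stretch of $\tilde F$ as well, so that $\Hu_{a,b}^\alpha(\tilde F)$ cannot reach beyond the region of agreement.

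Parts~4 and~5 are limiting statements. As $a\downarrow0$, rescaling~(\ref{eq:HF equiv1}) by $1/a$ and using one-sided continuity of $F$ shows $\tfrac1a\Psi_F(x)\to\alpha(1-F(x))-(1-\alpha)F(x^-)$, and this limit is strictly positive for $x<\min\Qu^\alpha(F)$ and strictly negative for $x>\max\Qu^\alpha(F)$; since $\Psi_F$ is non-increasing, the endpoints of $\Hu_a^\alpha(F)$ are forced toward $\min\Qu^\alpha(F)$ and $\max\Qu^\alpha(F)$ — the one extra input being that when $\Qu^\alpha(F)$ has non-empty interior one also has $\bigl(\min\Qu^\alpha(F)+a,\ \max\Qu^\alpha(F)-a\bigr)\subseteq\Hu_a^\alpha(F)$ by direct substitution ($F$ is flat at level $\alpha$ there), while when $\Qu^\alpha(F)$ is a single point the two one-sided bounds plus $\min\le\max$ suffice. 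As $a\to\infty$, finiteness of the first moment lets monotone convergence give $\Psi_F(x)\to\alpha\int_{[x,\infty)\cap I}(1-F)-(1-\alpha)\int_{(-\infty,x]\cap I}F$, a continuous, strictly decreasing function whose unique zero is $\Ex^\alpha(F)$; evaluating this limit just to either side of $\Ex^\alpha(F)$ then squeezes both endpoints of $\Hu_a^\alpha(F)$ to $\Ex^\alpha(F)$.

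The fussiest points, I expect, are in Part~2 — converting ``$F$ is flat on $(c-b,d+a)$'' into the exact level-set identity $\overline{F^{-1}(w)}=[c-b,d+a]$ and ruling out $w\in\{0,1\}$ — and the reverse inclusion in Part~6, where one must verify that the region of agreement, being pinned to $\Hu_{a,b}^\alpha(F)$, is wide enough to also control $\Hu_{a,b}^\alpha(\tilde F)$.
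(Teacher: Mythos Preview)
Your approach via the monotone continuous function $\Psi_F$ is exactly the paper's (with opposite sign: the paper writes $G=-\Psi_F$), and Parts~1--5 go through essentially as you sketch. The only notable variation is Part~4, where the paper sandwiches $\min\Hu_a^\alpha(F)$ directly between $q_0-a$ and $q_0+a$ (with $q_0=\min\Qu^\alpha(F)$) by evaluating $G_{a,a}$ at those two points, rather than passing through your rescaled pointwise limit; both routes are fine.

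The genuine gap is in Part~6, and you are right to flag the reverse inclusion as the sticking point --- but your proposed fix via Part~2 does not work. When $\Hu_{a,b}^\alpha(F)$ is a singleton Part~2 is unavailable, and even in the nondegenerate case nothing prevents the flat stretch of $\tilde F$ at level $w$ from extending strictly beyond $[c-b,d+a]$, which would enlarge $\Hu_{a,b}^\alpha(\tilde F)$. In fact the reverse inclusion, and hence Part~6 as stated, is \emph{false}: take $a=b=1$, $\alpha=1/2$, let $F$ put mass $\tfrac12$ at each of $\pm1$ so that $\Hu_{1,1}^{1/2}(F)=\{0\}$ with region of agreement $[-1,1]$, and let $\tilde F$ put mass $\tfrac12$ at each of $-10$ and $1$; then $F=\tilde F$ on $[-1,1]$ but $\Hu_{1,1}^{1/2}(\tilde F)=[-9,0]$ by Part~3. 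The paper's own proof has the same lacuna --- its ``reverse inclusion is obtained similarly'' would require agreement on $[\min\Hu_{a,b}^\alpha(\tilde F)-b,\max\Hu_{a,b}^\alpha(\tilde F)+a]$, which is not assumed. What both you and the paper do establish correctly is the one-sided inclusion $\Hu_{a,b}^\alpha(F)\subseteq\Hu_{a,b}^\alpha(\tilde F)$.
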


Part (1) is similar to Proposition 1(a) of \cite{bellini2014generalized}, whilst parts (4), (5) and (6) were noted, in the case of finite discrete distributions when $a=b$ and $\alpha=1/2$, by \cite{huber1964robust}. The proof is given in the appendix.

Part (6) can be interpreted as saying that the Huber functional only depends on the values of the CDF $F$ away from its tails. In situations where the tail of a predictive distribution is difficult to model, but a point summary describing its broad center is desired, this property is useful. In particular, the Huber functional is invariant to the modification of $F$ outside the interval $[\min(\Hu^\alpha_{a,b}(F))-b, \max(\Hu^\alpha_{a,b}(F)) + a]$. In contrast, modification of the tails of $F$ will generally change its mean and expectile values, whilst quantile values are invariant to modifications of $F$ anywhere apart from at the quantile.

Parts (2) and (3) specify conditions on $F$ for which $\Hu_{a,b}^\alpha(F)$ is multi-valued. Expectiles are always single-valued whereas quantiles can sometimes  be multi-valued. Multi-valued quantiles can arise when $F$ has a bi-modal probability density function (PDF), taking values from the interval $I$ that separates the two distinct densities that comprise the PDF. Huber functionals $\Hu^\alpha_{a,b}(F)$ provide a single-valued alternative to multi-valued quantiles by choosing $a$ and $b$ such that $a+b$ is at least the length of $I$. A precisely stated corollary is that if each level set of $F$ on $R(F)$ has length not exceeding $a+b$ then $\Hu^\alpha_{a,b}(F)$ is single-valued for every $\alpha$ in $(0,1)$. It also follows that $\Hu^\alpha_{a,b}(F)$ is single-valued whenever the quantile $\Qu^\alpha(F)$ is single-valued.

Figure~\ref{fig:huberfunctionals} illustrates a distribution $F$ for which $\Hu^{1/2}_a(F)$ is multi-valued if and only if $0<a<3$. In this particular case, $F$ has a symmetric bi-modal PDF, and also the property that $\Ex^{1/2}(F)\subseteq \Hu^{1/2}_a(F)\subset \Qu^{1/2}(F)$ whenever $a>0$.

Note that while $\Hu^\alpha_a(F)$ is in some sense an intermediary between $\Qu^\alpha(F)$ and $\Ex^\alpha(F)$, the right-hand side of Figure~\ref{fig:functionals} illustrates that the Huber quantile does not always lie between the corresponding quantile and expectile.

%%%%%%%%%%%%%%%%%%%%
\section{Scoring functions, consistency and elicitability}\label{s:scoring}

In this section we discuss scoring functions and their relationship to point forecasts and functionals. Two key concepts are those of consistency and elicitability. How these concepts relate to the Huber functional is the subject of Theorem~\ref{th:consistency}, which is the first major result of this paper.

%%%%%%%%%%%%%%%%%%%%%%%%
\subsection{Scoring functions and Bayes' rules}\label{ss:bayes}

\begin{definition}\label{def:scoring fn}
Suppose that $I\subseteq \RRR$. A function $S: I\times I \to\RRR$ is a called a \textit{scoring function} if $S(x,y)\geq0$ for all $(x,y)\in  I\times I$ with $S(x,y)=0$ whenever $x=y$. The scoring function $S$ is said to be \textit{regular} if (i) for each $x\in I$ the function $y \mapsto S(x,y)$ is measurable, and (ii) for each $y\in I$ the function $x\mapsto S(x,y)$ is continuous, with continuous derivative whenever $x\neq y$.
\end{definition}

The score $S(x,y)$ can be interpreted as the loss or cost accrued when the point forecast $x$ is issued and the observation $y$ realizes. Examples of scoring functions include the squared error scoring function $S(x,y)=(x-y)^2$, the absolute error scoring function $S(x,y)=|x-y|$ and the zero--one scoring function $S(x,y) = \one_{\{|x-y|\geq k\}}(x)$, for some positive $k$. Only first two of these are regular, whilst the zero--one scoring function fails to be regular on account of its discontinuity when $|x-y|=k$. The measurability condition (i) is a technical condition that is satisfied by most (if not all) scoring functions that arise in practice.

Huber loss (\ref{eq:h}) gives rise to the regular scoring function $S(x,y)=h_a(x-y)$. We introduce a more general version.

\begin{definition}\label{def:generalised Huber loss}
Suppose that $a > 0$, $b > 0$ and $\alpha\in(0,1)$. The \textit{generalized Huber loss function} $h_{a,b}^\alpha: \RRR \to \RRR$ is defined by
\[
h_{a,b}^\alpha(u) = 
\begin{cases}
|\one_{\{u\geq0\}}-\alpha|\,\tfrac{1}{2} u^2, & -a \leq u \leq b \\
(1-\alpha)\,b (u - \tfrac{1}{2} b), & u > b \\
-\alpha\, a (u + \tfrac{1}{2} a), & u < -a.
\end{cases}
\]
\end{definition}

The classical Huber loss function given by Equation (\ref{eq:h}) is 2$h^{1/2}_{a,a}$. The same generalization is used by \cite{zhao2021robust} for robust expectile regression. Figure~\ref{fig:huberfunctionals} shows the graph of $h_{2,1}^{0.7}$. Note that $h^\alpha_{a,b}$ is differentiable on $\RRR$, with derivative
\begin{equation}\label{eq:h'}
(h^\alpha_{a,b})'(u) = |\one_{\{u\geq0\}}-\alpha|\kappa_{a,b}(u), \qquad u\in\RRR\,.
\end{equation}
Generalized Huber loss gives rise to the regular scoring function $S(x,y)=h_{a,b}^\alpha(x-y)$.

Given a scoring function $S$, a forecast system that generates point forecasts can be assessed by computing its mean score $\bar S$, where
\[\bar S = \frac1n\sum_{i=1}^nS(x_i,y_i),\]
over a finite set of forecast cases $\{x_1,\ldots,x_n\}$ with corresponding observations  $\{y_1,\ldots,y_n\}$. In this framework, if a number of competing forecast systems are being compared then the one with the lowest mean score is the best performer. Thus, given a scoring function $S$ and predictive distribution $F$, an optimal point forecast is any $\hat x$ in $I$ that minimizes the expected score; that is,
\[\hat x = \arg \min_{x\in I} \EEE_F S(x,Y),\]
provided that the expectation exists. A point forecast that is optimal in this sense is also known as a \textit{Bayes' rule} \citep{gneiting2011making, ferguson1967probability}.

It has long been known that the Bayes' rule under the squared error scoring function $S(x,y)=(x-y)^2$ is the mean of $F$, and under the absolute error scoring function $S(x,y)=|x-y|$ is any median of $F$. The Bayes' rule under the asymmetric piecewise linear scoring function 
\begin{equation}\label{eq:asymmetric piecewise linear}
S(x,y)=|\one_{\{x\geq y\}}-\alpha||x-y|
\end{equation}
is a quantile $\Qu^\alpha(F)$ (e.g. \cite{ferguson1967probability}), whilst the Bayes' rule under the asymmetric quadratic scoring function
\begin{equation}\label{eq:asymmetric quadratic loss}
S(x,y)=|\one_{\{x\geq y\}}-\alpha|(x-y)^2
\end{equation}
is the expectile $\Ex^\alpha(F)$ \citep{newey1987asymmetric, gneiting2011making}. 

To find the Bayes' rule under the generalized Huber loss scoring function $S(x,y)=h^\alpha_{a,b}(x-y)$, we look for solutions $x$ to the equation $\partial_1 \EEE_F S(x,Y) = 0$. If interchanging differentiation and integration can be justified then $\EEE_F \partial_1 S(x,Y) = 0$.
Using Equation (\ref{eq:h'}), one obtains $\EEE_FV(x,Y)=0$, where $V$ is the identification function given by (\ref{eq:identification function}). This implies that $x\in H^\alpha_{a,b}(F)$. So, at least formally, the Bayes' rule under generalized Huber loss is the corresponding Huber functional of $F$. A precise statement will be given in the next subsection.

%%%%%%%%%%%%%%%%%%%%%
\subsection{Consistency and elicitability}\label{ss:consistency}

Whenever a point forecast request specifies what functional of the predictive distribution is being sought, the scoring function used to evaluate the point forecast should be appropriate for that functional.

\begin{definition}\label{def:consistent scoring fn}
\citep{gneiting2011making, murphy1985forecast}
Suppose that $I\subseteq\RRR$. A scoring function $S: I\times I\to \RRR$ is said to be \textit{consistent} for the functional $\mathrm{T}$ relative to a class $\F$ of probability distributions on $I$ if
\begin{equation}\label{eq:consistency def}
\EEE_F S(t,Y) \leq \EEE_F S(x,Y)
\end{equation}
for all probability distributions $F$ in $\F$, all $t$ in $\mathrm{T}(F)$ and all $x$ in $I$. The functional $\mathrm{T}$ is said to be \textit{strictly consistent} relative to the class $\F$ if it is  consistent relative to the class $\F$ and if equality in (\ref{eq:consistency def}) implies that $x \in \mathrm{T}(F)$.
\end{definition}

Evaluating point forecasts with a strictly consistent scoring function rewards forecasters who give truthful point forecast quotes from carefully considered predictive distributions. This is because the requested functional of the predictive distribution coincides with the optimal point forecast (or Bayes' rule).

The families of consistent scoring functions for quantiles and expectiles each have a standard form. Subject to slight regularity conditions, a scoring function $S$ is consistent for the quantile functional $\Qu^\alpha$ if and only if $S$ is of the form
\begin{equation}\label{eq:quantile consistent scoring function}
S(x,y) = |\one_{\{x\geq y\}}-\alpha||g(x)-g(y)|,
\end{equation}
where $g$ is a non-decreasing function \citep{gneiting2011quantiles, thomson1979eliciting, saerens2000building}. Moreover, if $g$ is strictly increasing then $S$ is strictly consistent. The standard asymmetric piecewise linear scoring function (\ref{eq:asymmetric piecewise linear}) for quantiles (which includes, up to a multiplicative constant, the absolute error scoring function for the median) is recovered from Equation (\ref{eq:quantile consistent scoring function}) with the choice $g(t)=t$.

Subject to standard regularity conditions, a scoring function $S$ is consistent for the expectile functional $\Ex^\alpha$ if and only if $S$ is of the form
\begin{equation}\label{eq:expectile consistent scoring function}
S(x,y) = |\one_{\{x\geq y\}}-\alpha|\big(\phi(y)-\phi(x)+\phi'(x)(x-y)\big),
\end{equation}
where $\phi$ is a convex function with subgradient $\phi'$ \citep{gneiting2011making}. Moreover, if $\phi$ is strictly convex then $S$ is strictly consistent. The standard asymmetric quadratic scoring function (\ref{eq:asymmetric quadratic loss}) for expectiles (including, up to a multiplicative constant, the squared error scoring function for the mean) is recovered from (\ref{eq:expectile consistent scoring function}) by taking $\phi(t)=t^2$.  When $\alpha=1/2$, the function $S$ of (\ref{eq:expectile consistent scoring function}) is known as a \textit{Bregman function}.

We will show that consistent scoring functions for the Huber functional also have a standard form. Before doing so, we introduce a critical concept related to the evaluation of point forecasts.

\begin{definition} \citep{lambert2008eliciting}
A statistical functional $\mathrm{T}$ is said to be \textit{elicitable} relative to a class $\F$ of probability distributions if there exists a scoring function $S$ that is strictly consistent for $\mathrm{T}$ relative to $\F$.
\end{definition}

For example, quantiles are elicitable relative to the class $\F(\RRR)$, while expectiles are elicitable relative to the class of distributions in $\F(\RRR)$ with finite first moment \citep{gneiting2011making}. It is worth noting that some statistical functionals are not elicitable, including the sum of two distinct quantiles and \textit{conditional value-at-risk}, a risk measure used in finance \citep{gneiting2011making}.

We turn now to the Huber functional. The main thrust (subject to appropriate regularity conditions) is that the Huber functional is elicitable, and that $S$ is consistent for $\Hu^\alpha_{a,b}$ if and only if $S$ is of the form
\begin{equation}\label{eq:huber scoring function}
S(x,y) = |\one_{\{x\geq y\}} - \alpha|\big(\phi(y) - \phi(\kappa_{a,b}(x-y)+y) + \kappa_{a,b}(x-y)\phi'(x)\big),
\end{equation}
where $\phi$ is a convex function with subgradient $\phi'$. Moreover, $S$ is strictly consistent if $\phi$ is strictly convex. The generalized Huber loss scoring function $S(x,y)=h^\alpha_{a,b}(x-y)$ arises from Equation (\ref{eq:huber scoring function}) with the choice $\phi(t)=t^2$. The following gives a precise statement.

\begin{theorem}\label{th:consistency}
Suppose that $I\subseteq\RRR$ is an interval and that $a > 0$, $b > 0$ and $\alpha\in(0,1)$.
\begin{enumerate}
\item The Huber functional $\Hu_{a,b}^\alpha$ is elicitable relative to the class of probability measures $\F(I)$ when $I$ is bounded or semi-infinite, and elicitable relative to the class of probability measures $\F(I)$ with finite first moment when $I=\RRR$.
\item Suppose that $\phi:I\to\RRR$ is convex on $I$. Then the function $S:I\times I\to\RRR$, defined by Equation (\ref{eq:huber scoring function}),
is a consistent scoring function for the Huber functional $\Hu_{a,b}^\alpha$ relative to the class  $\F(I)$ of probability measures $F$ for which both $\EEE_F[\phi(Y) - \phi(Y-a)]$ and $\EEE_F[\phi(Y) - \phi(Y+b)]$ exist and are finite. If, additionally, $\phi$ is strictly convex then $S$ is strictly consistent for $\Hu_{a,b}^\alpha$ relative to the same class of probability measures.
\item Suppose that the scoring function $S: I\times I\to \RRR$ is regular. If $S$ is consistent for the Huber functional $\Hu_{a,b}^\alpha$ relative to the class of probability measures in $\F(I)$ with compact support, then $S$ is of the form (\ref{eq:huber scoring function}) for some convex function $\phi:I\to\RRR$. Moreover, if $S$ is strictly consistent then $\phi$ is strictly convex.
\end{enumerate}
\end{theorem}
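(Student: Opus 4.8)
The plan is to treat the three parts in the natural order, deriving (1) as a consequence of (2). For part (2), fix $F$ in the stated class and a point $t\in\Hu^\alpha_{a,b}(F)$, and show directly that $\EEE_F S(x,Y)-\EEE_F S(t,Y)\geq 0$ for every $x\in I$, with strict inequality when $\phi$ is strictly convex and $x\notin\Hu^\alpha_{a,b}(F)$. The key computational device is the identification function $V$ of Equation~(\ref{eq:identification function}): I expect that $\partial_1 S(x,y)=V(x,y)\,\phi''(x)$ in the smooth case (or, in general, that the increment $S(x,y)-S(t,y)$ can be written as an integral $\int_t^x V(s,y)\,\dd\mu(s)$ against the measure $\mu$ associated with the subgradient $\phi'$). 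Granting this, write
\[
\EEE_F S(x,Y)-\EEE_F S(t,Y)=\int_{[t,x]}\EEE_F V(s,Y)\,\dd\mu(s)
\]
(with the obvious sign convention when $x<t$), and then use that $\EEE_F V(s,Y)$ has the sign of $s-t'$ for the relevant endpoint $t'$ of $\Hu^\alpha_{a,b}(F)$ — this monotonicity of $s\mapsto\EEE_F V(s,Y)$, together with its vanishing exactly on the interval $\Hu^\alpha_{a,b}(F)$, is precisely what Definition~\ref{def:Huber functional} and Proposition~\ref{prop:Huber funct properties} give us. The integrand is therefore $\geq 0$ throughout $[t,x]$, yielding consistency; strict convexity makes $\mu$ charge every nonempty open subinterval, forcing strict inequality once $x$ leaves $\Hu^\alpha_{a,b}(F)$. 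A small technical point to dispatch here is justifying the interchange of $\EEE_F$ with the $s$-integration by Fubini, which is where the integrability hypotheses on $\EEE_F[\phi(Y)-\phi(Y-a)]$ and $\EEE_F[\phi(Y)-\phi(Y+b)]$ are consumed.

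Part (1) then follows by exhibiting a strictly consistent score: take $\phi(t)=t^2$, so that $S=h^\alpha_{a,b}(x-y)$, and check that the integrability conditions of part (2) reduce exactly to the stated moment conditions — trivially satisfied when $I$ is bounded or semi-infinite (where $\phi(Y)-\phi(Y-a)$ is bounded below and, after the capping, grows only linearly), and equivalent to a finite first moment when $I=\RRR$.

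Part (3) is the converse and I expect it to be the main obstacle. The strategy is the standard "Osband/Gneiting" route: assume $S$ regular and consistent for $\Hu^\alpha_{a,b}$ on compactly supported distributions, and extract the form of $S$ by testing against two-point (and nearby) distributions. Consistency at two-point distributions $F=p\,\delta_{y_1}+(1-p)\,\delta_{y_2}$ forces the first-order condition $\partial_1\EEE_F S(x,Y)=0$ to hold exactly at the Huber functional, and since the Huber functional of such $F$ is pinned down by Equation~(\ref{eq:HF def new}), one obtains a functional identity relating $\partial_1 S(x,y_1)$ and $\partial_1 S(x,y_2)$ along the locus where $\alpha\kappa_{0,a}$-weighted and $(1-\alpha)\kappa_{0,b}$-weighted masses balance. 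This should show that $\partial_1 S(x,y)$ is a positive multiple — depending only on $x$ — of the identification function $V(x,y)$; write that multiple as $c(x)\geq 0$. One then integrates: $S(x,y)=\int V(s,y)c(s)\,\dd s+(\text{function of }y)$, the additive term being fixed by $S(y,y)=0$. Defining $\phi$ via $\phi''=c$ (i.e. $\phi'$ an antiderivative of $c$, hence non-decreasing, hence $\phi$ convex) and carrying out the integral should reproduce Equation~(\ref{eq:huber scoring function}) after recognizing the telescoping combination $\phi(y)-\phi(\kappa_{a,b}(x-y)+y)+\kappa_{a,b}(x-y)\phi'(x)$. Strict consistency upgrades $c>0$ on the interior, hence $\phi'$ strictly increasing and $\phi$ strictly convex. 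The delicate points are: (i) handling the three regimes of the capping function ($x-y<-a$, $-a\le x-y\le b$, $x-y>b$) separately and checking the pieces match up at the kinks $x-y=-a$ and $x-y=b$, where the regularity assumption (continuity of $\partial_1 S$ off the diagonal) is what keeps things glued; (ii) making sure the two-point distributions used actually lie in the admissible class and that their Huber functionals sweep out enough of $I\times I$ to pin down $c(\cdot)$ everywhere; and (iii) the case $y_1<y_2$ with the balance point landing outside $[y_1-b,y_2+a]$, which must be excluded or handled via Proposition~\ref{prop:Huber funct properties}(6). I would structure the write-up so that the bulk of this — the localization to two-point laws and the extraction of $c$ — is the technical heart, with the antidifferentiation being bookkeeping.
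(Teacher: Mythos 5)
Your part (3) follows essentially the paper's own route (Osband's principle with the identification function $V$ of (\ref{eq:identification function}), extraction of $\partial_1 S(x,y)=c(x)V(x,y)$, antidifferentiation, convexity of $\phi$ from nonnegativity/consistency), so there is nothing to change there. Your part (2) is a genuinely different and arguably cleaner argument than the paper's: the paper proves consistency by a direct case analysis, partitioning $I$ into seven regions determined by $x-b,x,x+a,t-b,t,t+a$ and showing each expected increment is a sum of nonnegative Bregman-type terms plus terms in $\phi'(x)-\phi'(t)$ that cancel by the defining equation (\ref{eq:HF def new}); you instead integrate the identification function against $\dd\phi'$. That route works: the map $s\mapsto\EEE_F V(s,Y)$ is (up to the integration-by-parts identity) the function $G_{a,b}$ in the proof of Proposition~\ref{prop:Huber funct properties}, which is continuous, nondecreasing, and vanishes exactly on $\Hu^\alpha_{a,b}(F)$, so your sign argument and the strictness argument go through. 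Two caveats: the identity $S(x,y)-S(t,y)=\int_t^x V(s,y)\,\dd\phi'(s)$ is exactly the content you must verify by a case analysis across the kinks at $x-y=-a,0,b$ (it is in substance the computation the paper carries out for Theorem~\ref{th:mixrep}), so you are front-loading the mixture representation rather than avoiding work; and the moment hypotheses on $\EEE_F[\phi(Y)-\phi(Y\mp a),\,\phi(Y)-\phi(Y+b)]$ are not really consumed by Fubini (which is trivial since $|V|\leq\max(a,b)$) but are needed so that $\EEE_F S(x,Y)$ and $\EEE_F S(t,Y)$ are individually finite and the difference of expectations equals the expectation of the difference.

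The genuine gap is in part (1). Taking $\phi(t)=t^2$ gives $\phi(Y)-\phi(Y-a)=2aY-a^2$, so the admissibility condition of part (2) is precisely $\EEE_F|Y|<\infty$; your claim that this is ``trivially satisfied'' for semi-infinite $I$ is false. Part (1) asserts elicitability relative to \emph{all} of $\F(I)$ when $I$ is semi-infinite, and for a heavy-tailed $F$ on, say, $[0,\infty)$ with infinite mean, $\EEE_F h^\alpha_{a,b}(x-Y)=\infty$ for every $x$ (the loss grows linearly in $|y|$), so the squared-loss-based score is not strictly consistent on that class and does not witness elicitability. The fix is the one the paper uses: choose $\phi$ strictly convex and \emph{bounded} on $I$ (e.g.\ $\phi(t)=e^{-t}$ when $I$ is bounded or bounded below, $\phi(t)=e^{t}$ when $I$ is of the form $(-\infty,c)$), so the conditions of part (2) hold for every $F\in\F(I)$; reserve $\phi(t)=t^2$ for $I=\RRR$ with the finite-first-moment restriction, where your reduction is correct.
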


The proof is given in the appendix. 

The general form (\ref{eq:huber scoring function}) for the consistent scoring functions of the Huber functional yields, as edge cases, the general form for the consistent scoring functions of expectiles and quantiles. To be precise, let $S_a^{\Hu,\phi}$ denote the scoring function $S$ given by (\ref{eq:huber scoring function}) when $a=b$, and let $S^{\Ex,\phi}$ and $S^{\Qu,g}$ denote the consistent scoring functions of Equations (\ref{eq:expectile consistent scoring function}) and (\ref{eq:quantile consistent scoring function}) respectively. The relationship between $S_a^{\Hu,\phi}$ and $S^{\Ex,\phi}$ is straightforward via pointwise limit
\begin{equation}\label{eq:Hu Ex pointwise limit}
\lim_{a\to\infty}S_a^{\Hu,\phi}(x,y) = S^{\Ex,\phi}(x,y).
\end{equation}
For the other end of the spectrum we consider the rescaled consistent scoring function $S_a^{\Hu,\phi}/a$, and obtain the pointwise limit
\begin{align}\label{eq:Hu Qu pointwise limit}
\lim_{a\downarrow0}S_a^{\Hu,\phi}(x,y)/a = S^{\Qu,\phi'}(x,y),
\end{align}
where $\phi'$ is nondecreasing because $\phi$ is convex. Importantly, the relevant regularity conditions ensure that every non-decreasing function $g$ in the representation (\ref{eq:quantile consistent scoring function}) is the subderivative of some suitable convex $\phi$.

The consistent scoring functions for the Huber functional thus show a mixture of the properties of the consistent scoring functions for quantiles and expectiles. Focusing on the functional $\Hu^{1/2}_a$ for positive $a$, the only consistent scoring function (up to a multiplicative constant) on $\RRR\times\RRR$ that only depends on the difference $x-y$ between the forecast and observation is the classical Huber loss scoring function $(x,y) \mapsto h^{1/2}_{a,a}(x-y)$. This is because the only Bregman function (up to a multiplicative constant) that has the same property for $\Ex^{1/2}$ is the squared error scoring function $(x,y)\mapsto(x-y)^2$ \citep{savage1971elicitation}. Hence, apart from multiples of classical Huber loss, other consistent scoring functions for $\Hu^{1/2}_a$ on $\RRR\times\RRR$ penalize under- and over-prediction asymmetrically. One such example is the exponential family
\begin{equation}\label{eq:exp scoring fns}
S_{\lambda;a}(x,y) =
\begin{cases}
\frac{1}{\lambda^2}\big(\exp(\lambda y)-\exp(\lambda x)\big) - \frac{1}{\lambda}\exp(\lambda x)(y-x), & |x-y|\leq a\\
\frac{1}{\lambda^2}\big(\exp(\lambda y)-\exp(\lambda (y+a))\big) + \frac{a}{\lambda}\exp(\lambda x), & x-y> a \\
\frac{1}{\lambda^2}\big(\exp(\lambda y)-\exp(\lambda (y-a))\big) - \frac{a}{\lambda}\exp(\lambda x), & x-y< -a,
\end{cases}
\end{equation}
parameterized by $\lambda\in\RRR$ and obtained from (\ref{eq:huber scoring function}) via $\phi(t)=2\exp(\lambda t)/\lambda^2$. These are analogous to the exponential family of Bregman functions considered by \cite{patton2020comparing}.

%%%%%%%%
\section{Mixture representations and Murphy diagrams}\label{s:mixture}
%%%%%%%%

The main theoretical tool presented in this section is the mixture representation for consistent scoring functions of the Huber functional (Theorem~\ref{th:mixrep}). Mixture representations were introduced for quantiles and expectiles by \cite{ehm2016quantiles} and have several very useful applications, including providing insight into forecast rankings.

%%%%%%%%%
\subsection{Ranking of forecasts}\label{ss:ranking}

Recall from Section~\ref{ss:bayes} that point forecasts from two competing forecast systems  $\rA$ and $\rB$ can be ranked by calculating their mean scores $\bar{S}_n^\rA$ and $\bar{S}_n^\rB$ over a finite number $n$ of forecast cases for some scoring function $S$. If the forecast cases are independent, a statistical test for equal predictive performance can be based on the statistic $t_n$, where
\begin{equation}\label{eq:tn}
t_n = \sqrt{n}\, \frac{\bar{S}_n^\rA - \bar{S}_n^\rB}{\hat{\sigma}_n} \quad\text{and}\quad \hat{\sigma}_n^2 = \frac{1}{n}\sum_{i=1}^n(S(x_i^\rA,y_i)-S(x_i^\rB,y_i))^2
\end{equation}
for forecasts $\{x_i^\rA\}$ and $\{x_i^\rB\}$ and corresponding realizations $\{y_i\}$. Subject to traditional regularity conditions, the statistic $t_n$ is standard normal under the null hypothesis of vanishing expected score differentials. Corresponding $p$-values are computed and if the null hypothesis is rejected then $\rA$ is preferred if $t_n<0$ and $\rB$ is preferred otherwise \citep{diebold1995comparing, gneiting2014probabilistic}. Unfortunately, forecast rankings and the results of hypothesis tests can depend on the choice of consistent scoring function \cite[pp.~506, 515--516]{ehm2016quantiles}, as we now illustrate.

\begin{example}\label{ex:OBSH}
Two forecast systems, OCF and MAN, of the Australian BoM produce point forecasts for the daily maximum temperature at Sydney Observatory Hill. The OCF system is fully automated and generates forecasts from a blend of bias-corrected numerical weather prediction forecasts. The MAN forecast is the official forecast of the BoM and is manually issued by meteorologists who have access to various information sources, including OCF. We consider forecasts for the period July 2018 to June 2020 with a lead time of one day. See Figure~\ref{fig:murphy} for a sample time series of MAN and OCF forecasts with observations.

Suppose that these forecasts are targeting the Huber mean $\Hu^{1/2}_3$, and make the simplifying assumption that successive forecast cases are independent. If the consistent scoring function $S(x,y)=2\,h^{1/2}_{3,3}(x-y)$ is used, then the mean score for MAN is lower than the mean score for OCF, and with a $p$-value of $6.52\times10^{-4}$ the null hypothesis of equal predictive performance is rejected at the 5\% significance level in favor of MAN forecasts. However, if the consistent scoring function $S_{2;3}$ defined by Equation (\ref{eq:exp scoring fns}) is used, then OCF has the lower mean score, albeit with a $p$-value of $0.333$ that does not lead to rejection of the null hypothesis.
\end{example}

%%%%%%%%
\subsection{Mixture representations}

In Section~\ref{ss:consistency} it was seen that the class of consistent scoring functions for each quantile, expectile and Huber functional is very large, being parametrized either by the set of nondecreasing functions or by the set of convex functions. The following results show that this apparent multitude can, in a certain sense, be reduced to a one-parameter family of so-called elementary scoring functions.

In general, the choice of function $\phi'$ in the representations (\ref{eq:expectile consistent scoring function}) and (\ref{eq:huber scoring function}) is not unique. To facilitate precise mathematical statements, a special version of $\phi'$ will be chosen. Let $\cI$ denote the class of all left-continuous non-decreasing functions on $\RRR$, and let $\cC$ denote the class of all convex functions $\phi:\RRR\to\RRR$ with subgradient $\phi'$ in $\cI$. This last condition will be satisfied if $\phi'$ is chosen to be the left-hand derivative of $\phi$. Denote by $\cS^\Qu_\alpha$ the class of scoring functions $S$ of the form (\ref{eq:quantile consistent scoring function}) such that $g\in\cI$, by $\cS^\Ex_\alpha$ the class of scoring functions $S$ of the form (\ref{eq:expectile consistent scoring function}) such that $\phi\in\cC$, and by $\cS^\Hu_{\alpha,a,b}$ the class of scoring functions $S$ of the form (\ref{eq:huber scoring function}) such that $\phi\in\cC$. For most practical purposes,  $\cS^\Qu_\alpha$, $\cS^\Ex_\alpha$ and $\cS^\Hu_{\alpha,a,b}$ can be identified with the class of consistent scoring functions for the respective functional on $\RRR$.

The following important result on the representation of scoring functions that are consistent for the quantile and expectile functionals is due to \cite{ehm2016quantiles}.

\begin{theorem}\label{th:ehm mixrep}\citep[Theorem 1]{ehm2016quantiles}
\begin{enumerate}
\item Every member $S$ of the class $\cS^\Qu_\alpha$ has a representation of the form
\begin{equation}\label{eq:mixrep quantiles}
S(x,y) = \int_{-\infty}^{\infty}S^\Qu_{\alpha,\theta}(x,y)\,\dd \mm(\theta), 	\qquad (x,y)\in\RRR^2\,,
\end{equation}
where 
\begin{equation}\label{eq:esf_qu}
S^\Qu_{\alpha,\theta}(x,y) 
=
\begin{cases}
1-\alpha, & y\leq\theta <x, \\
\alpha, & x\leq\theta <y, \\
0, & \text{otherwise,}
\end{cases}
\end{equation}
and $\mm$ is a non-negative measure. The mixing measure $\mm$ is unique and satisfies $\dd \mm(\theta) = \dd g(\theta)$ whenever $\theta\in\RRR$, where $g$ is the nondecreasing function in the representation (\ref{eq:quantile consistent scoring function}). Furthermore, $\mm(x)-\mm(y)=S(x,y)/(1-\alpha)$.
\item Every member $S$ of the class $\cS^\Ex_\alpha$ has a representation of the form
\begin{equation}\label{eq:mixrep expectiles}
S(x,y) = \int_{-\infty}^{\infty}S^\Ex_{\alpha,\theta}(x,y)\,\dd \mm(\theta), 	\qquad (x,y)\in\RRR^2\,,
\end{equation}
where 
\begin{equation}\label{eq:esf_ex}
S^\Ex_{\alpha,\theta}(x,y) 
=
\begin{cases}
(1-\alpha)|\theta - y|, & y\leq\theta <x, \\
\alpha|\theta - y|, & x\leq\theta <y, \\
0, & \text{otherwise,}
\end{cases}
\end{equation}
and $\mm$ is a non-negative measure. The mixing measure $\mm$ is unique and satisfies $\dd \mm(\theta) = \dd\phi'(\theta)$ whenever $\theta\in\RRR$, where $\phi'$ is the left-hand derivative of the convex function $\phi$ in the representation (\ref{eq:expectile consistent scoring function}). Furthermore, $\mm(x)-\mm(y)=\partial_2S(x,y)/(1-\alpha)$.
\end{enumerate}
\end{theorem}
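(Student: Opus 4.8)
The plan is to establish both parts by verifying the claimed integral identity pointwise in $(x,y)$ and then reading off uniqueness of the mixing measure from that computation. Fix $\alpha\in(0,1)$ and a member $S$ of the relevant class: $S$ has the form (\ref{eq:quantile consistent scoring function}) for some $g\in\cI$ in the quantile case, and the form (\ref{eq:expectile consistent scoring function}) for some $\phi\in\cC$ in the expectile case. Let $\mm$ be the Lebesgue--Stieltjes measure of $g$, respectively of the left-continuous function $\phi'$; by left-continuity, $\mm([y,x))=g(x)-g(y)$, respectively $\phi'(x)-\phi'(y)$, for every $y<x$, and $\mm$ is non-negative because $g$ (resp.\ $\phi'$) is non-decreasing. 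Since the target is an identity of functions on $\RRR^2$, it is enough to check it at an arbitrary $(x,y)$. The diagonal $x=y$ is trivial, and the case $x<y$ is handled exactly like $x>y$ with $\alpha$ in the role of $1-\alpha$; so take $x>y$, whence $|\one_{\{x\geq y\}}-\alpha|=1-\alpha$.

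For the quantile case, the elementary score $S^\Qu_{\alpha,\theta}(x,y)$ of (\ref{eq:esf_qu}) equals $1-\alpha$ precisely for $\theta\in[y,x)$ and vanishes otherwise when $x>y$. Hence
\[
\int_{-\infty}^{\infty}S^\Qu_{\alpha,\theta}(x,y)\,\dd\mm(\theta)=(1-\alpha)\,\mm([y,x))=(1-\alpha)\big(g(x)-g(y)\big)=S(x,y),
\]
using that $g$ non-decreasing gives $|g(x)-g(y)|=g(x)-g(y)$. The one delicate point is the endpoint convention: because $g$ is \emph{left}-continuous, $\mm([y,x))=g(x)-g(y)$ with no contribution from a possible atom of $\mm$ at $x$, which is exactly why the half-open interval $[y,x)$, rather than $(y,x]$, appears in (\ref{eq:esf_qu}). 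Reading off $\mm(x)-\mm(y)=S(x,y)/(1-\alpha)$ for $x>y$ then shows that any admissible mixing measure satisfies $\dd\mm=\dd g$, giving both existence (with the stated $\mm$) and uniqueness.

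For the expectile case, when $x>y$ the elementary score equals $(1-\alpha)|\theta-y|=(1-\alpha)(\theta-y)$ for $\theta\in[y,x)$ and vanishes otherwise, so the integral in (\ref{eq:mixrep expectiles}) is $(1-\alpha)\int_{[y,x)}(\theta-y)\,\dd\phi'(\theta)$. The heart of the argument is the Lebesgue--Stieltjes integration-by-parts identity
\[
\int_{[y,x)}(\theta-y)\,\dd\phi'(\theta)=(x-y)\phi'(x)-\int_y^x\phi'(\theta)\,\dd\theta=\phi(y)-\phi(x)+\phi'(x)(x-y),
\]
where the first equality is integration by parts, with left-continuity of $\phi'$ making the boundary term $(x-y)\phi'(x)$, and the second uses that a convex $\phi$ is the indefinite integral of its left-hand derivative. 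Multiplying by $1-\alpha$ recovers $S(x,y)$ in the form (\ref{eq:expectile consistent scoring function}). Differentiating the representation in $y$ (the integrand vanishes at the lower limit $\theta=y$, so no boundary term is produced) yields $\partial_2 S(x,y)=-(1-\alpha)\,\mm([y,x))$, which is the inversion identity of the theorem up to the sign dictated by the left-continuous normalisation; uniqueness of $\dd\mm=\dd\phi'$ follows as before.

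I expect the genuine work to be bookkeeping rather than conceptual: one must make sure every use of the Lebesgue--Stieltjes integral over $[y,x)$ and of integration by parts is consistently aligned with the left-continuous normalisation of $g$ and $\phi'$ --- in particular handling correctly the at most countably many atoms of $\dd\phi'$ --- and that the integrability built into membership of $\cS^\Qu_\alpha$ and $\cS^\Ex_\alpha$ (finiteness of the relevant increments of $g$ and of $\phi$) makes every integral finite for all $(x,y)\in\RRR^2$, with measurability in $\theta$ justifying the Fubini step implicit in the uniqueness argument. For orientation it also helps --- though it is not logically needed --- to note that each elementary score itself lies in the corresponding class: $S^\Qu_{\alpha,\theta}$ comes from the left-continuous step function $g=\one_{(\theta,\infty)}$, and $S^\Ex_{\alpha,\theta}$ from $\phi(t)=(t-\theta)_+$, for which $\phi'=\one_{(\theta,\infty)}$; in both cases $\dd\mm=\delta_\theta$, consistent with the stated relation between $\mm$ and $g$ (resp.\ $\phi'$).
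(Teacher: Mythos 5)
Your proof is correct and takes essentially the same route as the source: the paper itself does not prove this quoted result of \citet{ehm2016quantiles}, but its appendix proof of the Huber analogue, Theorem~\ref{th:mixrep}, is exactly this pointwise verification via Fubini/integration by parts under the left-continuous Lebesgue--Stieltjes convention, of which your quantile and expectile computations are the edge cases. One small remark: for $x>y$ one has $\partial_2 S(x,y)=(1-\alpha)\bigl(\phi'(y)-\phi'(x)\bigr)\le 0$, so the signed relation you derived, $\partial_2S(x,y)=-(1-\alpha)\,\mm([y,x))$, is the accurate form of the ``Furthermore'' clause (an orientation convention in the quoted statement) rather than an artefact of the left-continuous normalisation.
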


Both integral representations (\ref{eq:mixrep quantiles}) and (\ref{eq:mixrep expectiles}) hold pointwise. The functions defined by (\ref{eq:esf_qu}) and (\ref{eq:esf_ex}) are called \textit{elementary scoring functions} for the quantile and expectile functionals respectively. Thus Theorem~\ref{th:ehm mixrep} essentially states that each scoring function that is consistent for a quantile or expectile functional can be expressed as a weighted average of corresponding elementary scoring functions. The analogous result for Huber functionals is new and stated below.

\begin{theorem}\label{th:mixrep}
Every member $S$ of the class $\cS^\Hu_{\alpha,a,b}$ has a representation of the form
\begin{equation}\label{eq:mixrep}
S(x,y) = \int_{-\infty}^{\infty}S^\Hu_{\alpha,a,b,\theta}(x,y)\,\dd \mm(\theta), 	\qquad (x,y)\in\RRR^2\,,
\end{equation}
where 
\begin{equation}\label{eq:esf}
S^\Hu_{\alpha,a,b,\theta}(x,y) 
=
\begin{cases}
(1-\alpha)\min(\theta - y, b), & y\leq\theta <x, \\
\alpha\min(y - \theta, a), & x\leq\theta <y, \\
0 &\text{otherwise,}
\end{cases}
\end{equation}
and $\mm$ is a non-negative measure. The mixing measure is unique and satisfies $\dd \mm(\theta) = \dd\phi'(\theta)$ whenever $\theta\in\RRR$, where $\phi'$ is the left-hand derivative of the convex function $\phi$ in the representation (\ref{eq:huber scoring function}). Furthermore, $\mm(x)-\mm(y)=\partial_2S(x,y)/(1-\alpha)$.
\end{theorem}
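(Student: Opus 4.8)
The plan is to establish the mixture representation by differentiating the candidate identity in the second argument and matching against a known representation, then integrating back. First I would fix $(x,y)\in\RRR^2$ and, without loss of generality (by the symmetry of the roles of $x$ and $y$ via the cases in \eqref{eq:esf}), treat the case $y<x$; the case $y>x$ is analogous and $y=x$ gives $0$ on both sides. For $y<x$ the elementary score is $S^\Hu_{\alpha,a,b,\theta}(x,y) = (1-\alpha)\min(\theta-y,b)\,\one_{\{y\le\theta<x\}}$, so the right-hand side of \eqref{eq:mixrep} becomes $(1-\alpha)\int_{[y,x)}\min(\theta-y,b)\,\dd\mm(\theta)$. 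The strategy is to show this equals the right-hand side of \eqref{eq:huber scoring function} with $\mm=\dd\phi'$, where $\phi'$ is the left-hand derivative of $\phi$.

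The key computational step is to recognize that, for $y<x$, the bracketed term in \eqref{eq:huber scoring function} is $\phi(y)-\phi(\kappa_{a,b}(x-y)+y)+\kappa_{a,b}(x-y)\phi'(x)$, and to rewrite it using the fundamental theorem of calculus for convex functions: $\phi(v)-\phi(u) = \int_{[u,v)}\phi'(\theta)\,\dd\theta$ and, more usefully, $\phi(v)-\phi(u)-\phi'(v)(v-u) = -\int_{[u,v)}(\theta-u)\,\dd\phi'(\theta)$ (an integration-by-parts / Taylor-with-integral-remainder identity for convex $\phi$, valid since $\phi'$ is nondecreasing and left-continuous). Applying this with $u = \kappa_{a,b}(x-y)+y$ and $v=x$ converts the bracket into $\int_{[\kappa_{a,b}(x-y)+y,\,x)}(\theta - \kappa_{a,b}(x-y)-y)\,\dd\phi'(\theta)$ up to sign bookkeeping, and then one checks that $\min(\theta-y,b)$ agrees with $\theta-\kappa_{a,b}(x-y)-y$ plus a piece supported on $[y,\kappa_{a,b}(x-y)+y)$ where the integrand is the constant-in-$\theta$ value $b$ — i.e., splitting the interval $[y,x)$ at the point $y+\kappa_{a,b}(x-y)$ (which equals $\min(x,y+b)$ when $x-y\le b$ is not assumed, so one must separately handle $x-y\le b$ versus $x-y>b$). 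Careful case analysis on the size of $x-y$ relative to $a$ and $b$ then yields the claimed identity. Uniqueness of $\mm$ follows from the final formula $\mm(x)-\mm(y) = \partial_2 S(x,y)/(1-\alpha)$: differentiating \eqref{eq:huber scoring function} in $y$ gives $\partial_2 S(x,y) = (1-\alpha)\big(\phi'(y)-\phi'(\kappa_{a,b}(x-y)+y)(1-\kappa_{a,b}'(x-y))\cdots\big)$, but more cleanly, differentiating the mixture representation gives $\partial_2 S(x,y) = -(1-\alpha)\int_{[y,x)}\one_{\{\theta-y<b\}}\,\dd\mm(\theta)\cdot(\text{sign})$, and sending the arguments to recover $\mm(x)-\mm(y)$ pins down $\mm$ as a Lebesgue--Stieltjes measure, hence uniquely; that $\dd\mm=\dd\phi'$ is then read off from the construction.

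I expect the main obstacle to be the bookkeeping around the capping function $\kappa_{a,b}$: the natural breakpoints of the piecewise-linear integrand $\min(\theta-y,b)$ are $\theta=y+b$ and $\theta=x$, whereas the bracket in \eqref{eq:huber scoring function} naturally produces the breakpoint $\theta = y+\kappa_{a,b}(x-y)$, and these coincide only in certain regimes. Disentangling the three cases ($x-y\le -a$, $-a< x-y\le b$... wait, here $y<x$ so $x-y>0$, hence the relevant split is just $0<x-y\le b$ versus $x-y>b$) and, symmetrically, the cases with $x<y$ governed by $a$ rather than $b$, is where sign errors and off-by-an-endpoint errors are most likely; using left-continuous $\phi'$ and half-open intervals $[\cdot,\cdot)$ consistently should make the endpoint issues harmless. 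A secondary technical point is justifying the integration-by-parts identity for a general convex $\phi$ whose derivative may have jumps, but this is standard once $\phi'$ is taken left-continuous and $\dd\phi'$ is interpreted as the associated Lebesgue--Stieltjes measure. Once the $y<x$ case is done in full, the $x<y$ case is obtained by the evident reflection, and assembling the two gives \eqref{eq:mixrep}; the ``furthermore'' clause is immediate from the displayed formula for $\mm(x)-\mm(y)$, which also delivers uniqueness.
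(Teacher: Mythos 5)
Your plan is essentially the paper's own proof: both verify the pointwise identity between \eqref{eq:huber scoring function} and \eqref{eq:mixrep} by integration by parts (Taylor with integral remainder for convex $\phi$ with left-continuous subderivative $\phi'$), splitting into cases according to whether $x-y$ lies within or beyond the caps $-a$ and $b$, and both dispose of uniqueness by noting that the increments of $\mm$ are recoverable from $S$ via $\partial_2 S$. One descriptive slip that does not affect the computation: for $y<x$ with $x-y>b$, the integrand $\min(\theta-y,b)$ is the linear piece $\theta-y$ on $[y,\,y+b)$ and the constant $b$ on $[y+b,\,x)$, not the other way around as your middle paragraph suggests.
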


The proof is given in the appendix and is a simple adaptation of the proof for quantiles and expectiles. 

Each function $S^\Hu_{\alpha,a,b,\theta}$ of Theorem~\ref{th:mixrep} is called an \textit{elementary scoring function} for the Huber functional, and also belongs to $\cS^\Hu_{\alpha,a,b}$, as can be seen via Equation (\ref{eq:huber scoring function}) with the choice $\phi(t)=(t-\theta)_+$ and $\phi'(t)=\one_{\{\theta<t\}}$. The mixture representation of Equation~(\ref{eq:mixrep}) holds pointwise. Moreover, when $a=b$, the mixture representations for the consistent scoring functions of expectiles and quantiles emerge as edge cases of Theorem~\ref{th:mixrep} by taking limits as $a\to\infty$ and as $a\downarrow0$ and using the dominated convergence theorem. Details are given in Remark~\ref{rem:mixrep}.

%%%%%%%%%%%%%%%%
\subsection{Economic interpretation of elementary scoring functions}\label{ss:economic interpretation}

\cite{ehm2016quantiles} showed how the elementary scoring functions $S^\Qu_\alpha$ for quantiles have a natural economic interpretation related to binary betting and the classical simple cost--loss decision model (e.g \cite{richardson2000skill, wolfers2008prediction}). On the other hand, the elementary scoring functions $S^\Ex_\alpha$ for expectiles arise naturally in simple investment decisions where profits attract taxation and losses tax deduction, possibly at different rates \citep{ehm2016quantiles, bellini2017risk}. The elementary scoring functions $S^\Hu_{\alpha,a,b,\theta}$ for the Huber functional also admit an economic interpretation. It is the loss, relative to actions based on a perfect forecast, of an investment decision with fixed costs, possibly differential tax rates for profits versus losses, and where profits and losses are capped. This represents an intermediary position between the interpretation for quantiles (where economic losses, if they occur, are fixed irrespective of how near or far the forecast is to the realization) and that for expectiles (where there is no cap on profits or on losses). To illustrate, we give two examples. The first is an adaptation of the interpretation for the elementary scoring functions of expectiles presented by \cite{ehm2016quantiles}, while the second shows how the Huber functional and its elementary functions can arise in the context of investment decisions based on weather forecasts.

\begin{example}\label{ex:alexandra}
Suppose that Alexandra considers investing a fixed amount $\theta$ in a start-up company in exchange for an unknown future amount $y$ of the company's profits or losses. Additionally, Alexandra takes out an option to set a limit $b$ on losses she could incur but which also imposes a limit $a$ on the profits she could receive. Alexandra will make a profit if and only if $y>\theta$, and so adopts the decision rule to invest if and only if her point forecast $x$ of $y$ exceeds $\theta$. Her pay-off structure is as follows:
\begin{enumerate}
\item If Alexandra refrains from the deal, her pay-off will be 0, independent of the outcome $y$.
\item If Alexandra invests and $y\leq \theta$ realizes then her payout is negative at $-(1-r_L)\min(\theta-y, b)$. Here $\min(\theta-y, b)$ is the monetary loss, bounded by $b$, and the factor $1-r_L$ accounts for Alexandra's reduction in income tax with $r_L\in[0,1)$ representing the deduction rate.
\item If Alexandra invests and $y> \theta$ realizes then her pay-off is positive at $(1-r_G)\min(y-\theta,a)$, where $r_G\in[0,1)$ denotes the tax rate that applies to her profits.
\end{enumerate}

The top matrix in Table~\ref{tab:scoring matrix} shows Alexandra's pay-off under her decision rule. The positively-oriented pay-off matrix can be reformulated as a negatively oriented regret matrix, by considering the difference between the pay-off for an (hypothetical) omniscient investor who has access to a perfect forecast and the pay-off for Alexandra. For example, if $x\leq\theta$ and $y>\theta$ realizes, then the omniscient investor's pay-off is $(1-r_G)\min(y - \theta, b)$ while Alexandra's pay-off is 0, and so Alexandra's regret is $(1-r_G)\min(y - \theta, b)$. The bottom matrix of Table~\ref{tab:scoring matrix} is Alexandra's regret matrix, which up to a multiplication factor is the elementary score $S^\Hu_{\alpha,a,b,\theta}(x,y)$. So to minimize regret, Alexandra should invest if and only if $x>\theta$, where $x = \Hu^\alpha_{a,b}(F)$, $F$ is Alexandra's predictive distribution of the future value of the investment and $\alpha=(1-r_G)/(2-r_L-r_G)$. The point forecast $x = H^{1/2}_a(F)$ arises if profits and losses are capped by the same value and if the rates $r_G$ and $r_L$ are equal.
\end{example}

{\renewcommand{\arraystretch}{1.25}
\begin{table}
	\begin{center}
		\caption{Overview of pay-off structure for Alexandra's decision rule to invest if and only if $x>\theta$.}
		\label{tab:scoring matrix}
		\begin{tabular}{|ccc|}
			\hline
			& $y \leq \theta$ & $y > \theta$ \\
			\hline
			Monetary payoff & & \\
			$x\leq\theta$ & 0 & 0 \\
			$x > \theta$ & $-(1-r_L)\min(\theta-y, b)$ &  $(1-r_G)\min(y - \theta, a)$ \\
			\hline
			Score (regret) & & \\
			$x\leq\theta$ & 0 & $(1-r_G)\min(y - \theta, a)$ \\
			$x > \theta$ & $(1-r_L)\min(\theta-y, b)$ &  0 \\
			\hline
		\end{tabular}
	\end{center}
\end{table}
}

\begin{example}\label{ex:hannah}
Hannah runs a business selling ice creams from a mobile cart at a sports stadium. Historically, there is an approximately linear relationship between the volume of ice cream sales on any given afternoon and the observed daily maximum temperature, so that the profit $p$ from sales is modeled by $p=ky+c$, where $y$ is the observed daily maximum temperature, $k>0$ and $c\in\RRR$. Additionally, $0\leq p \leq a$ for some positive $a$, since total sales are limited by cart capacity, while any unsold units can be sold at a later date. If Hannah chooses to sell ice creams on any given afternoon, she must also pay a fixed cost $f$ (staff wages and stadium fees). If model assumptions are correct, Hannah will make a profit if and only if $ky+c>f$. So she adopts the decision rule to sell ice creams on any given afternoon if and only if her point forecast $x$ of the maximum temperature exceeds the decision threshold $\theta$, where $\theta=(f-c)/k$. Her pay-off structure is as follows.
\begin{enumerate}
\item If Hannah does not sell ice creams then her pay-off is 0.
\item If Hannah sells ice creams and $y>\theta$ then her profit after tax is $(1-r_G)\min(ky+c-f, a-f)$, where $r_G\in[0,1)$ denotes the tax rate. Her profit can be rewritten as $(1-r_G)k\min(y-\theta, (a-f)/k)$.
\item If Hannah sells ice creams and $y<\theta$ then her loss after tax deductions is $(1-r_L)\min(f - (ky+c), f)$, where $r_G\in[0,1)$ denotes the deduction rate, and losses are capped by $f$ since unsold ice creams go back into storage. Her loss can be rewritten as $(1-r_L)k\min(\theta-y, f/k)$.
\end{enumerate}
As with Example~\ref{ex:alexandra}, these outcomes can be converted to a regret matrix, which up to a multiplication factor is the elementary score $S^\Hu_{\alpha,(a-f)/k,f/k,\theta}(x,y)$ where $\alpha = (1-r_G)/(2-r_L-r_G)$. Consequently, her optimal decision rule is to sell ice creams if and only if $x>\theta$, where $\theta=(f-c)/k$, $x\in\Hu^{\alpha}_{(a-f)/k,f/k}(F)$, $F$ is her predictive distribution of the maximum temperature and $\alpha = (1-r_G)/(2-r_L-r_G)$.
\end{example}

The essential features of Example~\ref{ex:hannah} also arise in the context of rainfall storage and water trading. Any profits made by selling harvested water are capped by storage capacity. The predicted volume $v$ of water that is collected from any rainfall event can be modeled by $v=ky+c$, where $y$ is the predicted rainfall at a representative point within the catchment, $c$ is catchment initial loss and $k$ is determined by catchment size and continuing loss.

%%%%%%%%%%%%%%%%%%
\subsection{Forecast dominance, Murphy diagrams and choice of consistent scoring function}\label{ss:murphy}

We return to the problem of forecast rankings with the notion of \textit{forecast dominance} \cite[Section~3.2]{ehm2016quantiles}. We say that forecast system A \textit{dominates} forecast system B for point forecasts targeting a specific Huber functional if the expected score of point forecasts from A is not greater than the expected score of point forecasts from B, for every consistent scoring function. In practice this is impossible to check directly because the family of consistent scoring functions, parameterized by $\phi\in\cC$, is very large. However, by the mixture representation of Theorem~(\ref{th:mixrep}), one need only test for dominance over the family, parametrized by $\theta\in\RRR$, of elementary functions. In empirical situations, this is further reduced to checking forecast dominance for finitely many $\theta$. In what follows, we consider tuples $(x_{i\rA}, x_{i\rB}, y_i)$ consisting of the $i$th point forecast from systems A and B along with the corresponding observation $y_i$.

\begin{corollary}\label{cor:empirical dominance}
Suppose that $\alpha\in(0,1)$, $a>0$ and $b>0$. The forecast system $\rA$ empirically dominates $\rB$ for predictions targeting $\Hu^\alpha_{a,b}$ if
\[
\frac{1}{n}\sum_{i=1}^n S^\Hu_{\alpha, a, b, \theta}(x_{i\rA},y_i) \leq \frac{1}{n}\sum_{i=1}^n S^\Hu_{\alpha, a, b, \theta}(x_{i\rB},y_i)
\]
whenever $\theta\in\bigcup\{x_{i\rA}, x_{i\rB}, y_i, y_i-a, y_i+b: 1\leq i\leq n\}$ and in the left-hand limit as $\theta\uparrow\theta_0$, where $\theta_0\in\bigcup\{x_{i\rA}, x_{i\rB}, : 1\leq i\leq n\}$.
\end{corollary}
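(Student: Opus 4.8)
The plan is to reduce empirical dominance over \emph{all} consistent scoring functions to a pointwise nonnegativity statement about one piecewise-affine function of the mixing parameter $\theta$, and then to exploit that piecewise-affine structure to localise the check to finitely many values of $\theta$. Set
\[
D(\theta) = \frac{1}{n}\sum_{i=1}^n\Big(S^\Hu_{\alpha,a,b,\theta}(x_{i\rB},y_i) - S^\Hu_{\alpha,a,b,\theta}(x_{i\rA},y_i)\Big).
\]
Empirical dominance of $\rA$ over $\rB$ for $\Hu^\alpha_{a,b}$ means $\tfrac1n\sum_i S(x_{i\rA},y_i)\le\tfrac1n\sum_i S(x_{i\rB},y_i)$ for every $S\in\cS^\Hu_{\alpha,a,b}$. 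By Theorem~\ref{th:mixrep}, any such $S$ satisfies $S(x,y)=\int_{-\infty}^\infty S^\Hu_{\alpha,a,b,\theta}(x,y)\,\dd\mm(\theta)$ pointwise for a non-negative measure $\mm$, and since each of these finitely many integrals is finite, linearity of the integral gives
\[
\frac{1}{n}\sum_{i=1}^n\big(S(x_{i\rB},y_i)-S(x_{i\rA},y_i)\big) = \int_{-\infty}^\infty D(\theta)\,\dd\mm(\theta).
\]
Hence it suffices to deduce $D(\theta)\ge0$ for every $\theta\in\RRR$ from the hypothesised finitely many inequalities.

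The second step is to record the shape, for fixed $(x,y)$, of the map $\theta\mapsto S^\Hu_{\alpha,a,b,\theta}(x,y)$. Inspecting the three cases in~(\ref{eq:esf}) and splitting according to whether $x>y$, $x<y$ or $x=y$, one checks that this map is non-negative, bounded, right-continuous and piecewise affine in $\theta$, is identically zero for $\theta$ outside a bounded interval, and has a kink or a jump only at $\theta\in\{x,y,y-a,y+b\}$: the inner $\min$ contributes a kink at $y+b$ (when $x>y$) or at $y-a$ (when $x<y$), the endpoint $\theta=y$ of the active interval is a continuity point, and the endpoint $\theta=x$ is the only location at which a jump discontinuity can occur. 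Averaging over $i$, it follows that $D$ is right-continuous, piecewise affine and identically zero outside a bounded interval, with all its kinks and jumps contained in the finite set $T=\bigcup_{i=1}^n\{x_{i\rA},x_{i\rB},y_i,y_i-a,y_i+b\}$ and with jump discontinuities contained in $T_0=\bigcup_{i=1}^n\{x_{i\rA},x_{i\rB}\}$.

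Finally, since $D$ is affine on each open interval between consecutive points of $T$ and identically zero on the two unbounded intervals, its infimum over $\RRR$ equals the minimum of $0$, of the values $D(\theta)$ for $\theta\in T$ (right-continuity supplies the value at the left endpoint of each affine piece), and of the left-limits $D(\theta^-)$ for $\theta\in T$; and at the points of $T\setminus T_0$, where $D$ is continuous, the left-limit equals $D(\theta)$ and contributes nothing new. The hypotheses of the corollary are precisely that $D(\theta)\ge0$ for $\theta\in T$ and that $D(\theta_0^-)\ge0$ for $\theta_0\in T_0$, so $D\ge0$ on all of $\RRR$; integrating against $\mm\ge0$ then yields empirical dominance. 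I expect the only genuinely delicate point to be the case analysis in the second paragraph — in particular establishing right-continuity and determining which of the breakpoints $x$, $y$, $y-a$, $y+b$ are continuity points and which carry a jump — with everything else following routinely from piecewise linearity and from the mixture representation already proved.
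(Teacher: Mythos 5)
Your proof is correct and follows essentially the same route as the paper: reduce dominance to nonnegativity of the elementary-score differential via the mixture representation of Theorem~\ref{th:mixrep}, then use that this differential is right-continuous and piecewise linear in $\theta$, vanishing outside a bounded set, with jumps only at the forecast values and slope changes only at $y_i$, $y_i-a$, $y_i+b$, so checking the listed points and left-hand limits suffices. Your case analysis of $\theta\mapsto S^\Hu_{\alpha,a,b,\theta}(x,y)$ simply fills in the details the paper leaves as a remark.
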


To see why, note that the score differential $\theta\mapsto d_i(\theta)$ for the $i$th forecast case is piecewise linear and right-continuous, and is zero unless $\theta$ lies between $x_{i\rA}$ and $x_{i\rB}$. The only possible discontinuities are at $x_{i\rA}$ and $x_{i\rB}$, and the only possible changes of slope are at $y_i$, $y_i-a$ and $y_i+b$.

An empirical check for forecast dominance is aided with the use of a \textit{Murphy diagram} \cite[Section~3.3]{ehm2016quantiles}, which is a plot showing the graph of 
\[\theta \mapsto  \frac{1}{n}\sum_{i=1}^n S^\Hu_{\alpha, a, b, \theta}(x_i,y_i)\]
for each forecast source, computed at each of the points $\theta$ of Corollary~\ref{cor:empirical dominance}. The top left of Figure~\ref{fig:murphy} presents the Murphy diagram for three different forecasts targeting the Huber mean $\Hu^{1/2}_3$ of the daily maximum temperature at Sydney Observatory Hill (July 2018 to June 2020). The MAN and OCF forecasts were discussed in Example~\ref{ex:OBSH}. For any given day, the Climate forecast is the Huber mean $\Hu^{1/2}_3$ of 46 observations, sampled from the previous 15 days and from a 31 day period this time last year centered on the day in question. A lower mean score is better.

\begin{figure}[bt]
  \centering
  \includegraphics{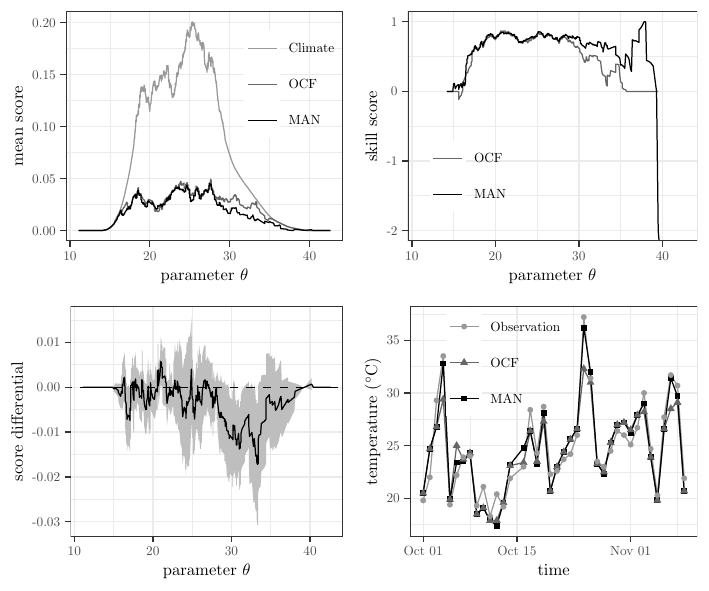}
  \caption{Competing forecast systems targeting the Huber mean $\Hu^{1/2}_3$ for the daily maximum temperature at Sydney Observatory Hill (July 2018 to June 2020). Top left: Murphy diagram of mean elementary scores. Top right: Murphy diagram of elementary skill scores. Bottom left: mean elementary score difference of OCF and MAN with pointwise 95\% confidence intervals (less than 0 indicates that MAN is preferable). Bottom right: Sample of the forecast--observation time series, October and November 2019.}
  \label{fig:murphy}
\end{figure}

The graph in the top right of Figure~\ref{fig:murphy} represents forecast performance as a skill score \citep[Section~2.7]{jolliffe2003forecast} with respect to two reference forecasts: the perfect forecast (skill score = 1) and the Climate forecast (skill score = 0). The difference in mean elementary scores between OCF and MAN forecasts is presented in the bottom left, with pointwise 95\% confidence intervals. Neither of these forecasts dominates the other.

Returning to Example~\ref{ex:hannah}, if Hannah's decision rule is to sell ice creams if and only if the $\Hu^{1/2}_3$ point forecast $x$ exceeds $30^\circ$C, then Hannah should base her decisions on the MAN forecast, since its mean elementary score, which is proportional to economic regret, is lowest (see the top left of Figure~\ref{fig:murphy} where $\theta=30$). But if her fixed investment costs $f$ changed, then so would her decision threshold $\theta$, and the Murphy diagram indicates which forecast system historically performed better at the new threshold.

The initial motivation for the BoM using Huber loss was to inform decisions for streamlining forecast production that are broadly consistent with public appraisal of forecast quality. The decision model associated with the elementary scoring functions of $\Hu^{1/2}_3$ could be taken as a proxy for the decision model of the average forecast user. Under this assumption, the evidence presented in Figure~\ref{fig:murphy} suggests that using the automated OCF forecast would have negligible impact for the average user who has a decision threshold lower than $28^\circ$C. However, evidence points towards human meteorologists (MAN) producing better forecasts for users with temperature decision thresholds from the high 20s to the mid 30s and possibly beyond. Such information can be used to inform BoM policy regarding when its official forecast can be based on the automated system and when meteorologists should intervene. It also indicates where future improvements to the OCF system are possible.

The mixture representation and Murphy diagram also gives insight into why the two different scoring functions of Example~\ref{ex:OBSH} lead to different forecast rankings. The classical Huber loss scoring function $S(x,y)=2\,h^{1/2}_{3,3}(x-y)$ is obtained from Equation~(\ref{eq:huber scoring function}) with the choice $\phi(t)=t^2$. The corresponding mixing measure is $\dd\mm(\theta)=2\,\dd\theta$, implying that every elementary scoring function in the mixture representation (\ref{eq:mixrep}) is weighted equally, and also that the area underneath each graph in the Murphy diagram (top left of Figure~\ref{fig:murphy}) is twice the mean Huber loss $\bar S$ for that forecast system. On the other hand, the exponential scoring function $S_{2;3}$ is obtained from Equation (\ref{eq:huber scoring function}) with the choice $\phi(t)=\exp(2t)/2$. In this case $\dd\mm(\theta)=2\exp(2\theta)\,\dd\theta$ and so mean elementary scores in the corresponding mixture representation are weighted heavily for higher values of $\theta$. Hence when scored by $S_{2;3}$, a slight over-forecast of $40.4^\circ$C by MAN on 19 December 2019 (OCF forecast $35.4^\circ$C and the observation was $39.3^\circ$C) was penalized substantially more heavily than the OCF under-forecast, resulting in a higher mean score $\bar{S}_{2;3}$ for MAN than OCF.

Finally, we consider the choice of consistent scoring function for Huber quantile point predictions in the situation where the point forecast serves the needs of a diverse community of users. Classical Huber loss, obtained when $\phi(t)=2t^2$, applies equal weight to all $\theta$. For everyday use, this choice of $\phi$ may be justified by the desire to weight all decision thresholds $\theta$ equally. On the other hand, for weather forecasts there may be a desire, from a public risk perspective, to give greater weight to values of $\theta$ that lie in the hazardous climatological extremes, so that competing forecast system candidates are evaluated with that in mind. For maximum temperature forecasts, the mixing measure $\dd\mm(\theta) =\phi''(\theta)\,\dd\theta$, where 
\[
\phi''(\theta) = 
\begin{cases}
(5-\theta)+1\,, & \theta \leq 5 \\
1\,, & 5< \theta < 35\\
(\theta-35)+1\,, & \theta\geq 35\,,
\end{cases}
\]
puts increasing weight on decision thresholds below $5^\circ$C and above $35^\circ$C. This yields the convex function
\[
\phi(\theta) = 
\begin{cases}
\frac{1}{6}(5-\theta)^3+\frac{1}{2}\theta^2\,, & \theta\leq 5 \\
\frac{1}{2}\theta^2\,, & 5 < \theta < 35 \\
\frac{1}{6}(\theta-35)^3+\frac{1}{2}\theta^2\,, & \theta\geq 35\,.
\end{cases}
\]
and the corresponding consistent scoring function $S$ can be computed from Equation (\ref{eq:huber scoring function}). With this $S$, MAN maximum temperature forecasts for Sydney outperform those of OCF, and with a $p$-value of $1.48\times10^{-3}$ the null hypothesis of equal predictive performance is rejected at the 5\% significance level.

If comparing predictive performance with emphasis on the extremes is desired, the mixing measure can be designed to concentrate positive weight on the region of interest. For example, choosing $\dd\mm(\theta) = \one_{\{\theta\geq35\}}\,\dd\theta$ emphasizes performance for decision thresholds of at least $35^\circ$C. \cite{taggart2021extremes} discusses evaluation of extremes and decompositions of consistent scoring functions for quantiles, expectiles and Huber means using precisely this approach.

%%%%%%%%%%%%%%%%%%
\section{Conclusion}\label{s:conclusion}

We have defined the Huber functional so that it gives the set of optimal point forecasts for minimizing the expected generalized Huber loss. The Huber functional is an intermediary between quantiles and expectiles, which it nests as edge cases. The Huber functional incorporates more information about a predictive distribution $F$ than quantiles, yet unlike expectiles it is not sensitive to the behavior of $F$ at its tails. We have shown that the Huber functional is elicitable, given a characterization of its consistent scoring functions and stated the mixture representation for those scoring functions. These theoretical results enable the use of the Huber functional and its associated consistent scoring functions within a theoretically sound framework for point forecasting and evaluation (see \cite{gneiting2011making}, \cite{gneiting2014probabilistic}, \cite{ehm2016quantiles} and the references therein).  Moreover, the Huber functional is shown to arise naturally within decision theory for a broad class of investment problems, and within this context the mixture representation facilitates some justification for the choice of consistent scoring function when point forecasts targeting the Huber functional are utilized by a heterogeneous user group. 

Many organizations, including meteorological agencies, have traditionally issued point forecasts that are not well-defined, and that are consumed by a very broad user group. Where there is appetite to clarify forecast definitions, and where it is desirable that point forecasts target some `middle' point of the predictive distribution, the Huber mean provides a good candidate functional, as it combines the local sensitivity of the expectation with the global robustness of the median. The classical Huber loss scoring function $S(x,y)=h^{1/2}_{a,a}(x-y)$ is a natural choice for a consistent scoring function of the Huber mean, as it favors all user-decision thresholds equally (in the sense discussed in Section~\ref{ss:murphy}). Nonetheless, if it is desirable that forecast performance at some user-decision thresholds is more important than at others, the mixture representation provides a method for generating the appropriate scoring function.

\appendix

\section{Proofs}

\begin{proof}[Proof of Proposition~\ref{prop:Huber funct properties}]
We first prove the proposition for the case when $I=\RRR$. 

In light of the essential equivalence of Equations (\ref{eq:HF def new}) and (\ref{eq:HF equiv1}), define $G_{a,b}:\RRR\to\RRR$ by
\begin{equation}\label{eq:G for functional}
G_{a,b}(u) = (1-\alpha)\int_{u-b}^u F(t)\,\dd t - \alpha \int_u^{u+a} (1-F(t))\,\dd t, \qquad u\in \RRR.
\end{equation}
Where then is no confusion, we will drop the subscripts and simply be denote the function by $G$. Since the CDF $F$ is nonnegative and nondecreasing, it follows that $G$ is a nondecreasing function on $\RRR$. Moreover, $G$ is also continuous on $\RRR$.

First we will show that the set of zeroes of $G$ is nonempty and lies in $R(F)$, which will establish that $\Hu_{a,b}^\alpha(F)$ is a nonempty subset of $R(F)$. Since $G$ is continuous and nondecreasing on $\RRR$, it suffices to show that that $G$ takes at least one positive and one negative value in any neighborhood of $R(F)$, which will also establish that the zero set is bounded.

Suppose that $\varepsilon>0$. If $R(F)$ has finite left-endpoint $r_0$ then 
\begin{align*}
G(r_0-\varepsilon) 
&= -\alpha \int_{r_0-\varepsilon}^{r_0-\varepsilon+a} \dd t + \alpha \int_{r_0}^{r_0-\varepsilon+a} F(t)\,\dd t \\
&\leq -\alpha a + \alpha(a+\varepsilon) \\
&< 0 \,.
\end{align*}
Otherwise, let $\eta = \alpha a /((1-\alpha)b +\alpha a)$ and note that $0<\eta<1$. So there exists $v$ in $R(F)$ such that $F(u) <\eta$ whenever $u\leq v$. So
\[
G(v-a) < (1-\alpha)b\eta + \alpha a\eta -\alpha a = 0\,.
\]
Similarly, if $R(F)$ has a finite right-endpoint $r_1$ then $G(r_1+\varepsilon)>0$.
Otherwise, there exists $w$ in $R(F)$ such that $F(u)>\eta$ whenever $u\geq w$. So
\[G(w+b) > (1-\alpha)b\eta + \alpha a\eta -\alpha a = 0\,.\]
This shows that $G$ has at least one zero, and since $\varepsilon$ is arbitrary and $G$ is nondecreasing and continuous, all the zeroes are contained in the interval $R(F)$, and the zero set is a closed bounded interval.

To prove parts (2) and (3)  when $I=\RRR$, we note that the zero set of $G$ is $[c,d]$ where $c<d$ only if there is a constant $w\in(0,1)$ such that $F(t)=w$ whenever $t\in\{\bigcup((u-b,u)\cup(u,u+a)):u\in[c,d]\}$. The closure of this latter set is precisely $[c-b,d+a]$. Moreover, if $u$ is any such zero of $G$, (\ref{eq:G for functional}) implies that
\[0 = G(u) = (1-\alpha)bw +\alpha a w - \alpha w.\]
Rearranging gives  $\alpha = bw/(bw+a(1-w))$ as required.

To prove part (4), fix $\alpha$ and $F$. Define $q_0$ and $q_1$ by
\[q_0 = \min(\Qu^\alpha(F)) \qquad\text{and}\qquad q_1 = \max(\Qu^\alpha(F))\,.\]
Suppose that $a>0$. Denoting $\lim_{y\uparrow x} F(y)$ by $F(x^-)$, note that
\begin{equation}\label{eq:qs}
F(q_i^-)\leq\alpha\leq F(q_i)\,, \quad F(q_0-a) < \alpha \quad\text{and}\quad F(q_1+a) > \alpha\,.
\end{equation}
Therefore
\begin{align*}
G_{a,a}(q_0-a) 
&\leq (1-\alpha) a F(q_0-a) + \alpha a F(q_0) - \alpha a \\
&< a \alpha (F(q_0)-F(q_0-a)) + a\alpha - \alpha a \\
&\leq 0\,,
\end{align*}
and similarly,
\[G_{a,a}(q_0+a) \geq (1-\alpha)\alpha a + \alpha^2 a -\alpha a = 0\,.\]
This shows that $q_0-a < \min(\Hu^\alpha_a(F)) \leq q_0+a$, from which is obtained $\lim_{a\downarrow0}\min(\Hu^\alpha_a(F)) = q_0$.
Similarly, one can show that $G_{a,a}(q_1+a)>0$ and $G_{a,a}(q_1-a)\leq0$, which are used to establish $\lim_{a\downarrow0}\max(\Hu^\alpha_a(F)) = q_1$.

Part (5) follows from the definition of expectiles and the Huber functional.

To prove part (6), let $\tilde G_{a,b}$ denote the function of the form (\ref{eq:G for functional}) defined using $\tilde F$ in the place of $F$, and suppose that $F=\tilde F$ on the interval $[\min(\Hu^\alpha_{a,b}(F))-b, \max(\Hu^\alpha_{a,b}(F)) + a]$. If $x\in\Hu^{\alpha}_{a,b}(F)$ then $G_{a,b}(x)=0$ and hence $\tilde G_{a,b}(x)=0$, whence $\Hu^{\alpha}_{a,b}(F) \subseteq \Hu^{\alpha}_{a,b}(\tilde F)$. The reverse inclusion is obtained similarly.

Finally, for each part, the case when $I\subset\RRR$ can be deduced from the case when $I=\RRR$ by considering the natural extension of $F\in\F(I)$ to $\F(\RRR)$, and using the fact that $H^\alpha_{a,b}(F)\subseteq R(F)\subseteq I$.
\end{proof}

\begin{proof}[Proof of Theorem~\ref{th:consistency}] 
To prove part (2), we take a similar approach to the proof presented in \cite[pp. 38--39]{brehmer2017elicitability} (which follows \cite{gneiting2011making}) of the consistency theorem for expectiles. Suppose that $F\in\F(I)$ and that the expectations $\EEE_F[\phi(Y) - \phi(Y-a)]$ and $\EEE_F[\phi(Y) - \phi(Y+b)]$ both exist and are finite, which will guarantee the existence of the expectations that follow. Fix $a$ and $b$ in $(0,\infty)$ and $\alpha$ in $(0,1)$. For convenience, denote $\kappa_{a,b}$ by $\kappa$. Consider $t\in \Hu^\alpha_{a,b}(F)$ and let $x\in\RRR$. Suppose that $\phi$ is convex and let $S$ be defined by (\ref{eq:huber scoring function}). We need to show that
\begin{equation}\label{eq:ESxt}
\EEE_F S(x,Y) - \EEE_F S(t,Y)\geq0.
\end{equation}
Define the function $g:I \times I\to\RRR$ by
\[g(u,v) = \phi(v) - \phi(u) - \phi'(u)(v-u)\]
and note that $g$ is nonnegative by the convexity of $\phi$, and strictly positive if $\phi$ is strictly convex. Define the function $f:I \times I\to\RRR$ by
\[f(u,v) = \phi'(u) - \phi'(v)\,,\]
and note that $f(u,v)\geq0$ whenever $u\geq v$ by the convexity of $\phi$, with $f(u,v)>0$ whenever $u>v$ if $\phi$ is strictly convex.

To show (\ref{eq:ESxt}), we break it up into two main cases (either $x<t$ or $t<x$) and then into several sub-cases. Consider first the case where $x<t$ with subcase $x-b < x < x+a \leq t-b < t < t+a$. Define the sets $A_i$, where $i\in\{1,2,...,7\}$, by $A_1=\{Y\in(\infty, x-b)\cap I\}$, $A_2=\{Y\in[ x-b,x]\cap I\}$, $A3=\{Y\in(x, x+a]\cap I\}$, $A_4=\{Y\in(x+a, t-b)\cap I\}$, $A_5=\{Y\in[t-b, t]\cap I\}$, $A_6=\{Y\in(t, t+a]\cap I\}$ and $A_7=\{Y\in(t+a, \infty)\cap I\}$. Note that the sets $A_i$ are disjoint and that their union is $I$. Hence
\[
\EEE_F S(x,Y) - \EEE_F S(t,Y) = \EEE_F(S(x,Y)-S(t,Y))\sum_{i=1}^7\one_{A_i}
\]
We will calculate each term in the series and sum them together at the end. The calculations are:
\begin{align*}
&\EEE_F(S(x,Y)-S(t,Y))\one_{A_1} \\
&\quad = (1-\alpha)f(x,t)\EEE_F\kappa(t-Y)\one_{A_1}\,, \\
&\EEE_F(S(x,Y)-S(t,Y))\one_{A_2} \\
&\quad = (1-\alpha)\EEE_F\big(g(x,Y+b) + f(x,t)\kappa(t-Y)\big)\one_{A_2} \,,\\
&\EEE_F(S(x,Y)-S(t,Y))\one_{A_3} \\
&\quad = \EEE_F\big(\alpha S(x,Y)-(1-\alpha)S(t,Y)\big)\one_{A_3} \\
&\quad = \alpha\EEE_F g(x,Y)\one_{A_3} + (1-\alpha)\EEE_F\big(g(Y,Y+b)+bf(Y,x)+f(x,t)\kappa(t-Y)\big)\one_{A_3}\,, \\
&\EEE_F(S(x,Y)-S(t,Y))\one_{A_4} \\
&\quad = \EEE_F\big(\alpha S(x,Y)-(1-\alpha)S(t,Y)\big)\one_{A_4} \\
&\quad = \alpha\EEE_F\big(g(Y-a,Y) + f(Y-a,x)\big)\one_{A_4} \\
&\qquad + (1-\alpha)\EEE_F\big(g(Y,Y+b)+bf(Y,x)+f(x,t)\kappa(t-Y)\big)\one_{A_4} \,,\\
&\EEE_F(S(x,Y)-S(t,Y))\one_{A_5} \\
&\quad = \EEE_F\big(\alpha S(x,Y)-(1-\alpha)S(t,Y)\big)\one_{A_5} \\
&\quad = \alpha\EEE_F\big(g(Y-a,Y) + f(Y-a,x)\big)\one_{A_5} \\
&\qquad + (1-\alpha)\EEE_F\big(g(Y,t)+(t-Y)f(Y,x)+f(x,t)\kappa(t-Y)\big)\one_{A_5} \,,\\
&\EEE_F(S(x,Y)-S(t,Y))\one_{A_6} \\
&\quad = \alpha\EEE_F\big(g(Y-a,t) + (t-Y+a)f(Y-a,x) + f(x,t)\kappa(t-Y)\big)\one_{A_6} \,,\\
&\EEE_F(S(x,Y)-S(t,Y))\one_{A_7} \\
&\quad = \alpha f(x,t)\EEE_F\kappa(t-Y)\one_{A_7}\,.
\end{align*}
Now when summing these terms together, note that since $t\in \Hu^\alpha_{a,b}(F)$, Equation (\ref{eq:HF def new}) implies that
\begin{equation}\label{eq:vanishing sum}
(1-\alpha)\EEE_F\kappa(t-Y)\one_{A_1\cup A_2\cup A_3\cup A_4\cup A_5} + \alpha\EEE_F\kappa(t-Y)\one_{A_6\cup A_7} = 0
\end{equation}
and thus the all terms containing $f(x,t)$ vanish. The remaining terms are all nonnegative by the properties of $f$ and $g$, which establishes (\ref{eq:ESxt}) in this particular subcase and hence that $S$ is consistent for $\Hu^\alpha_{a,b}$.

To prove strict consistency in this subcase, suppose that $\phi$ is strictly convex and that equality holds in (\ref{eq:ESxt}). So we must have
\begin{align*}
0
&= \EEE_F(S(x,Y)-S(t,Y))\sum_{i=1}^7\one_{A_i} \\
&= (1-\alpha)\EEE_F g(x,Y+b)\one_{A_2} + (1-\alpha)\EEE_F g(Y,Y+b)\one_{A_3} + \alpha\EEE_F g(Y-a,Y)\one_{A_4} \\
& \quad + \alpha\EEE_F g(Y-a,Y)\one_{A_5} + \alpha\EEE_F g(Y-a,t)\one_{A_6} + K\,,
\end{align*}
where $K$ can be written as a sum of nonnegative terms, having applied (\ref{eq:vanishing sum}). Each of the terms in the final expression is nonnegative, so for equality to hold they must all equal $0$. Now the terms involving $A_3$, $A_4$ and $A_5$ are all strictly positive unless $\PPP(Y\in A_i)=0$ for $i=3,4,5$. Similarly, the terms involving $A_2$ and $A_6$ are positive unless $\PPP(Y\in A_2\backslash\{x-b\}) = \PPP(Y\in A_6\backslash\{t+a\}) = 0$. Together, this implies that $\PPP(Y\in(x-b,t+a)\cap I)=0$, or equivalently that $F$ is constant on $(x-b,t+a)\cap I$. Combining this with the fact that $t\in \Hu^\alpha_{a,b}(F)$ if and only if (\ref{eq:HF equiv1}) holds, it is easy to see that $x\in \Hu^\alpha_{a,b}(F)$. This establishes strict consistency.

For the main case $x<t$, there are four further subcases:
\begin{align*}
& x-b < x \leq t-b < x+a \leq t < t+a \\
& x-b < t-b < x \leq x+a < t < t+a \\
& x-b < t-b < x < t < x+a < t+a \\
& x-b < x \leq t-b < t \leq x+a < t+a\,.
\end{align*}
The proof of consistency for each subcase proceeds in the same way as the first subcase, and if proceeding in this order most of the calculations in subcases that have already been proved can be used to prove subsequent subcases. The proof of strict consistency also proceeds similarly for the first case, by showing that $F$ is constant on $(x-b,t+a)\cap I$. Details are left to the reader.

The case when $t<x$ is proved the same way, but calculations are quicker by exploiting symmetry and anti-symmetry. For example, the subcase
\[t-b < t < t+a \leq x-b < x < x+a\]
proceeds by switching the roles of $t$ and $x$ in the definitions of $A_i$, and then making the switches $-a \leftrightarrow b$ and $A_i  \leftrightarrow A_{8-i}$ in the calculations for each term. For example, in the case when $t > x$ we have $A_6 = \{Y\in(t,t+a]\cap I\}$ and
\begin{align*}
&\EEE_F(S(x,Y)-S(t,Y))\one_{A_6} \\
&\, = \alpha\EEE_F\big(g(Y-a,t) + (t-Y+a)f(Y-a,x) + f(x,t)\kappa(t-Y)\big)\one_{A_6}\,,
\end{align*}
while in the case when $x < t$, after making switches, we have $A_2=\{Y\in(t-b,t]\cap I\}$ and
\begin{align*}
&\EEE_F(S(x,Y)-S(t,Y))\one_{A_2} \\
&\, = \alpha\EEE_F\big(g(Y+b,t) + (t-Y-b)f(Y+b,x) + f(x,t)\kappa(t-Y)\big)\one_{A_2}\,.
\end{align*}
All the terms are nonnegative apart from those involving $f(x,t)$, which will vanish when all the terms are summed together. Details are left to the reader. This completes the proof of part (2).

To prove part (1) for the cases when $I$ is bounded or semi-finite, use the result of part (2) with the bounded (on $I$) strictly convex function $\phi(t)=e^{-t}$ (or $\phi(t)=e^{t}$ if $I$ is the of the form $(-\infty,c)$). When $I=\RRR$, use the same approach with $\phi(t) = t^2$ and note that $\EEE_F[\phi(Y) - \phi(Y-a)]$ and $\EEE_F[\phi(Y) - \phi(Y+b)]$ exists and is finite if $\EEE_FY$ exists and is finite.

To prove part (3), we apply Osband's principle with the identification function $V$ of Equation (\ref{eq:identification function}). An argument similar to \cite[p. 753, 759]{gneiting2011making} shows that
\[\partial_1S(x,y) = h(x)V(x,y)\]
for $x,y\in I$ and some function $h:I\to I$. Integration by parts yields the representation (\ref{eq:huber scoring function}), where the function $\phi$ is defined by
\[\phi(x) = \int_{x_0}^x\int_{x_0}^v h(u) \, \dd u\,\dd v\]
for some $x_0$ in $I$. Now since $S(x,y)\geq0$ for all $x,y\in I$, it follows from (\ref{eq:huber scoring function}) that $(x-y)\phi'(x) + \phi(y) - \phi(x)\geq0$ whenever $-a \leq x-y \leq b$, which in turn implies that $\phi$ is convex on $I$. If $S$ is strictly consistent, then $S(x,y)>0$ for all non-identical $x$ and $y$ in $I$, whence a similar argument shows that $\phi$ is strictly convex.
\end{proof}

\begin{proof}[Proof of Theorem~\ref{th:mixrep}]
Suppose that $a>0$, $b>0$ and $\phi\in\cC$. Define the function $\Phi:\RRR^2\to\RRR$ by
\[\Phi(x,y) = \phi(y)-\phi(\kappa_{a,b}(x-y)+y) + \kappa_{a,b}(x-y)\phi'(x)\,, \qquad x,y\in\RRR\,.\]
We will show that
\begin{equation}\label{eq:Phi identity}
\Phi(x,y) = 2\int_{-\infty}^\infty S^\Hu_{1/2, a, b, \theta}(x,y)\,\dd\phi'(\theta)\,,
\end{equation}
from whence follows the mixture representation (\ref{eq:mixrep}), the fact that $ \dd\mm(\theta) = \dd\phi'(\theta)$ and the relationship $\mm(x)-\mm(y)=\partial_2S(x,y)/(1-\alpha)$ whenever $x>y$.

To show (\ref{eq:Phi identity}), we break into five cases. For the case $x-y<-a$,
\begin{align*}
\Phi(x,y) 
&= \phi(y)-\phi(y-a)-a\phi'(x) \\
&= a(\phi'(y-a)-\phi'(x)) + (y-\theta)\phi'(\theta)\Big|_{\theta=y-a}^y + \int_{y-a}^y \phi'(\theta)\,\dd\theta \\
&= \int_x^{y-a} a \,\dd\phi'(\theta) + \int_{y-a}^y (y-\theta) \, \dd\phi'(\theta) \\
&= \int_x^y \min(y-\theta,a)\,\dd\phi'(\theta) \\
&= 2\int_{-\infty}^\infty S^\Hu_{1/2, a, b, \theta}(x,y)\,\dd\phi'(\theta)\,.
\end{align*}
The case $x-y>b$ is handled analogously. The case $-a\leq x-y<0$ is essentially the same as the proof of the case $x<y$ for expectiles \cite[p.~529]{ehm2016quantiles}, and the case $0< x-y\leq b$ is analogous. The final case $x=y$ is trivial.

Finally, note that the increments of $\mm$ are determined by $S$ and so the mixing measure is unique.
\end{proof}

\begin{rem}\label{rem:mixrep}
We show how the mixture representations for the consistent scoring functions of quantiles and expectiles (Theorem~\ref{th:ehm mixrep}) emerge as limiting cases of Theorem~\ref{th:mixrep}. Consider the case for expectiles first. For fixed $x$, $y$ and $\theta$ we have
\[S^\Ex_{\alpha,\theta}(x,y) = \lim_{a\to\infty} S^\Hu_{\alpha,a,a,\theta}(x,y)\,.\]
Using the notation and limits following the statement of Theorem~\ref{th:consistency},
\begin{align*}
S^{\Ex,\phi}_\alpha(x,y)
&= \lim_{a\to\infty}S^{\Hu,\phi}_{\alpha,a}(x,y) \\
&= \lim_{a\to\infty}\int_{-\infty}^{\infty}S^\Hu_{\alpha,a,a,\theta}(x,y)\,\dd\mm(\theta)\\
&= \int_{-\infty}^{\infty}S^\Ex_{\alpha,\theta}(x,y)\,\dd\mm(\theta)\,,
\end{align*}
where the interchange of limits and integration in the final equality is justified by the dominated convergence theorem and where $\dd \mm(\theta) = \dd\phi'(\theta)$. This recovers the mixture representation for expectiles. Turning now to quantiles, for fixed $x$, $y$ and $\theta$ we have
\[
\lim_{a\downarrow0} \tfrac{1}{a}S^\Hu_{\alpha,a,a,\theta}(x,y) = 
\begin{cases}
1 - \alpha\,, &\qquad y < \theta <x\\
\alpha\,, &\qquad x \leq \theta <y\\
0 &\qquad \text{otherwise,}
\end{cases}
\]
and so $\lim_{a\downarrow0} \tfrac{1}{a}S^\Hu_{\alpha,a,a,\theta}(x,y) = S^\Qu_{\alpha,\theta}(x,y)$ for almost every $\theta$ (differing only when $\theta=y$). Hence, using the notation and limits following Theorem~\ref{th:consistency} and the dominated convergence theorem,
\begin{align*}
S^{\Qu,\phi'}_\alpha(x,y)
&= \lim_{a\downarrow0}\tfrac{1}{a}S^{\Hu,\phi}_{\alpha,a}(x,y) \\
&= \lim_{a\downarrow0}\int_{-\infty}^{\infty}\tfrac{1}{a}S^\Hu_{\alpha,a,a,\theta}(x,y)\,\dd\mm(\theta)\\
&= \int_{-\infty}^{\infty}S^\Qu_{\alpha,\theta}(x,y)\,\dd\mm(\theta)\,,
\end{align*}
where $\dd \mm(\theta) = \dd\phi'(\theta)$. This recovers the mixture representation for quantiles.
\end{rem}

%%%%%%%%%%%%%%%%%
\section*{Acknowledgements}%\addcontentsline{toc}{section}{Acknowledgements}

The author would like to thank Jonas Brehmer, Professor Tilmann Gneiting, Deryn Griffiths, Robert Fawcett, Nicholas Loveday and two anonymous reviewers for the constructive comments and suggestions, which improved the quality of this manuscript. I also thank my family for their support. Finally, an expression of gratitude to Harry Jack, who introduced me to Huber loss, and to Michael Foley, who four years ago made the decision to start scoring Bureau of Meteorology temperature forecasts using Huber loss. I would not have otherwise embarked on this study.

%\section*{conflict of interest}
%You may be asked to provide a conflict of interest statement during the submission process. Please check the journal's author guidelines for details on what to include in this section. Please ensure you liaise with all co-authors to confirm agreement with the final statement.

% Submissions are not required to reflect the precise reference formatting of the journal (use of italics, bold etc.), however it is important that all key elements of each reference are included.


\begin{thebibliography}{}

\bibitem[Bellini and Di~Bernardino, 2017]{bellini2017risk}
Bellini, F. and Di~Bernardino, E. (2017).
\newblock Risk management with expectiles.
\newblock {\em The European Journal of Finance}, 23(6):487--506.

\bibitem[Bellini et~al., 2014]{bellini2014generalized}
Bellini, F., Klar, B., M{\"u}ller, A., and Gianin, E.~R. (2014).
\newblock Generalized quantiles as risk measures.
\newblock {\em Insurance: Mathematics and Economics}, 54:41--48.

\bibitem[Breckling and Chambers, 1988]{breckling1988m}
Breckling, J. and Chambers, R. (1988).
\newblock M-quantiles.
\newblock {\em Biometrika}, 75(4):761--771.

\bibitem[Brehmer, 2017]{brehmer2017elicitability}
Brehmer, J.~R. (2017).
\newblock Elicitability and its application in risk management.
\newblock Master's thesis, University of Mannheim.
\newblock Downloaded on 10 October 2020 from
  \url{https://arxiv.org/pdf/1707.09604.pdf}.

\bibitem[{Bureau of Meteorology}, 2017]{bom2018upgrades}
{Bureau of Meteorology} (2017).
\newblock Upgrades to the operational {PME} system.
\newblock Bureau National Operations Centre Operations Bulletin 116, Bureau of
  Meteorology.
\newblock Downloaded on 14 October 2021 from
  \url{http://www.bom.gov.au/australia/charts/bulletins/apob116_external.pdf}.

\bibitem[Diebold and Mariano, 1995]{diebold1995comparing}
Diebold, F. and Mariano, R. (1995).
\newblock Comparing predictive accuracy.
\newblock {\em J. Bus. Econ. Stat.}, 13:253--263.

\bibitem[Ehm et~al., 2016]{ehm2016quantiles}
Ehm, W., Gneiting, T., Jordan, A., and Kr{\"u}ger, F. (2016).
\newblock Of quantiles and expectiles: consistent scoring functions, choquet
  representations and forecast rankings.
\newblock {\em J. R. Statist. Soc. B}, 78:505--562.

\bibitem[Ferguson, 1967]{ferguson1967probability}
Ferguson, T.~S. (1967).
\newblock {\em Probability and mathematical statistics}.
\newblock Academic Press, Inc., New York.

\bibitem[Foley et~al., 2019]{foley2019evidence}
Foley, M., Griffiths, D., Ioannou, I., Hider, A., Loveday, N., Price, B.,
  Collins, D., and Graham, P. (2019).
\newblock Using evidence to streamline forecast production.
\newblock AMOS Annual Meeting and International Conference on Tropical
  Meteorology and Oceanography. Abstract accessed on 12 October 2020 from
  \url{http://amos-ictmo-2019.m.amos.currinda.com/schedule/session/42/abstract/284}.

\bibitem[Gneiting, 2011a]{gneiting2011making}
Gneiting, T. (2011a).
\newblock Making and evaluating point forecasts.
\newblock {\em Journal of the American Statistical Association},
  106(494):746--762.

\bibitem[Gneiting, 2011b]{gneiting2011quantiles}
Gneiting, T. (2011b).
\newblock Quantiles as optimal point forecasts.
\newblock {\em International Journal of forecasting}, 27(2):197--207.

\bibitem[Gneiting and Katzfuss, 2014]{gneiting2014probabilistic}
Gneiting, T. and Katzfuss, M. (2014).
\newblock Probabilistic forecasting.
\newblock {\em Annual Review of Statistics and Its Application}, 1:125--151.

\bibitem[Griffiths et~al., 2017]{griffiths2017advice}
Griffiths, D., Jack, H., Foley, M., Ioannou, I., and Liu, M. (2017).
\newblock Advice for automation of forecasts: a framework.
\newblock Bureau research report~21, Bureau of Meteorology.
\newblock Downladed on October 14 2021 from
  \url{http://www.bom.gov.au/research/publications/researchreports/BRR-021.pdf}.

\bibitem[Horowitz and Manski, 2006]{horowitz2006identification}
Horowitz, J.~L. and Manski, C.~F. (2006).
\newblock Identification and estimation of statistical functionals using
  incomplete data.
\newblock {\em Journal of Econometrics}, 132(2):445--459.

\bibitem[Huber, 1964]{huber1964robust}
Huber, P.~J. (1964).
\newblock Robust estimation of a location parameter.
\newblock {\em Annals of Mathematical Statistics}, 35:73--101.

\bibitem[Jolliffe and Stephenson, 2003]{jolliffe2003forecast}
Jolliffe, I. and Stephenson, D. (2003).
\newblock {\em Forecast verification: a practitioner's guide in atmospheric
  science}.
\newblock Wiley and Sons.

\bibitem[Jones, 1994]{jones1994expectiles}
Jones, M.~C. (1994).
\newblock Expectiles and m-quantiles are quantiles.
\newblock {\em Statistics \& Probability Letters}, 20(2):149--153.

\bibitem[Just and Foley, 2020]{just2020streamlining}
Just, A. and Foley, M. (2020).
\newblock Streamlining the graphical forecast process.
\newblock {\em Journal of Southern Hemisphere Earth Systems Science},
  70(1):108--113.

\bibitem[Lambert et~al., 2008]{lambert2008eliciting}
Lambert, N.~S., Pennock, D.~M., and Shoham, Y. (2008).
\newblock Eliciting properties of probability distributions.
\newblock In {\em Proceedings of the 9th ACM Conference on Electronic
  Commerce}, pages 129--138.

\bibitem[Merkle and Steyvers, 2013]{merkle2013choosing}
Merkle, E.~C. and Steyvers, M. (2013).
\newblock Choosing a strictly proper scoring rule.
\newblock {\em Decision Analysis}, 10(4):292--304.

\bibitem[Murphy, 1977]{murphy1977value}
Murphy, A.~H. (1977).
\newblock The value of climatological, categorical and probabilistic forecasts
  in the cost-loss ratio situation.
\newblock {\em Monthly Weather Review}, 105(7):803--816.

\bibitem[Murphy and Daan, 1985]{murphy1985forecast}
Murphy, A.~H. and Daan, H. (1985).
\newblock Forecast evaluation.
\newblock In Murphy, A.~H. and Katz, R.~W., editors, {\em Probability,
  Statistics, and Decision Making in the Atmospheric Sciences}, pages 379--437.
  Westview Press, Boulder, CO.

\bibitem[Newey and Powell, 1987]{newey1987asymmetric}
Newey, W.~K. and Powell, J.~L. (1987).
\newblock Asymmetric least squares estimation and testing.
\newblock {\em Econometrica: Journal of the Econometric Society}, pages
  819--847.

\bibitem[Patton, 2020]{patton2020comparing}
Patton, A.~J. (2020).
\newblock Comparing possibly misspecified forecasts.
\newblock {\em Journal of Business \& Economic Statistics}, 38(4):796--809.

\bibitem[Richardson, 2000]{richardson2000skill}
Richardson, D.~S. (2000).
\newblock Skill and relative economic value of the ecmwf ensemble prediction
  system.
\newblock {\em Quarterly Journal of the Royal Meteorological Society},
  126(563):649--667.

\bibitem[Saerens, 2000]{saerens2000building}
Saerens, M. (2000).
\newblock Building cost functions minimizing to some summary statistics.
\newblock {\em IEEE Transactions on neural networks}, 11(6):1263--1271.

\bibitem[Savage, 1971]{savage1971elicitation}
Savage, L.~J. (1971).
\newblock Elicitation of personal probabilities and expectations.
\newblock {\em Journal of the American Statistical Association},
  66(336):783--801.

\bibitem[Schervish, 1989]{schervish1989general}
Schervish, M.~J. (1989).
\newblock A general method for comparing probability assessors.
\newblock {\em The Annals of Statistics}, 17(4):1856--1879.

\bibitem[Sturrock and Griffiths, 2020]{sturrock2020changing}
Sturrock, J. and Griffiths, D. (2020).
\newblock The changing role of operational meteorologists.
\newblock {\em Journal of Southern Hemisphere Earth Systems Science},
  70(1):114--119.

\bibitem[Taggart, 2021]{taggart2021extremes}
Taggart, R. (2021).
\newblock Evaluation of point forecasts for extreme events using consistent
  scoring functions.
\newblock {\em Quarterly Journal of the Royal Meteorological Society}, pages ,
  to appear, doi: 10.1002/qj.4206.

\bibitem[Thomson, 1979]{thomson1979eliciting}
Thomson, W. (1979).
\newblock Eliciting production possibilities from a well-informed manager.
\newblock {\em Journal of Economic Theory}, 20:360--380.

\bibitem[Wolfers and Zitzewitz, 2008]{wolfers2008prediction}
Wolfers, J. and Zitzewitz, E. (2008).
\newblock Prediction markets in theory and practice.
\newblock In Durlauf, S.~N. and Blume, L.~E., editors, {\em The New Palgrave
  Dictionary of Economics}. Palgrave Macmillan, London.

\bibitem[Zhao et~al., 2021]{zhao2021robust}
Zhao, J., Yan, G., and Zhang, Y. (2021).
\newblock Robust estimation and shrinkage in ultrahigh dimensional expectile
  regression with heavy tails and variance heterogeneity.
\newblock {\em Statistical Papers}, pages 1--28.

\end{thebibliography}
\end{document}